\newtheorem{theorem}{Theorem}[section]
\newtheorem{lemma}[theorem]{Lemma}
\newtheorem{cor}[theorem]{Corollary}
\newtheorem{definition}[theorem]{Definition}
\newtheorem{remark}[theorem]{Remark}
\newtheorem{example}[theorem]{Example}
\def\pagenumber{1}
\begin{document}
\setcounter{page}{\pagenumber}
\parindent=0pt
\def\SRA{\hskip 2pt\hbox{$\joinrel\mathrel\circ\joinrel\to$}}
\def\tbox{\hskip 1pt\frame{\vbox{\vbox{\hbox{\boldmath$\scriptstyle\times$}}}}\hskip 2pt}
\def\circvert{\vbox{\hbox to 8.9pt{$\mid$\hskip -3.6pt $\circ$}}}
\def\IM{\hbox{\rm im}\hskip 2pt}
\def\COIM{\hbox{\rm coim}\hskip 2pt}
\def\COKER{\hbox{\rm coker}\hskip 2pt}
\def\TR{\hbox{\rm tr}\hskip 2pt}
\def\GRAD{\hbox{\rm grad}\hskip 2pt}
\def\RANK{\hbox{\rm rank}\hskip 2pt}
\def\MOD{\hbox{\rm mod}\hskip 2pt}
\def\DEN{\hbox{\rm den}\hskip 2pt}
\def\DEG{\hbox{\rm deg}\hskip 2pt}

\title[Exotic $n$-d'Alembert PDE's and Stability]{\mbox{}\\[1cm] EXOTIC $\mathbf{n}$-D'ALEMBERT PDE's AND STABILITY}

\author{Agostino Pr\'astaro}
\thanks{This paper will be published in the book: \textit{Nonlinear Analysis: Stability, Approximation and Inequalities} - Dedicated to Themistocles M. Rassias for his $60^{th}$ birthday. (Eds. G. Georgiev (USA), P. Pardalos (USA) and H. M. Srivastava (Canada)), Springer, New York.\\
Work partially supported by MIUR Italian grants ''PDE's Geometry and Applications''.}
\maketitle
\vspace{-.5cm}
{\footnotesize
\begin{center}
Department of Methods and Mathematical Models for Applied
Sciences, University of Rome ''La Sapienza'', Via A.Scarpa 16,
00161 Rome, Italy. \\
E-mail: {\tt Prastaro@dmmm.uniroma1.it}
\end{center}
}

\begin{abstract} \noindent
In the framework of the PDE's algebraic topology, previously introduced by A. Pr\'astaro, {\em exotic $n$-d'Alembert PDE's} are considered. These are $n$-d'Alembert PDE's, $(d'A)_n$, admitting Cauchy manifolds $N\subset (d'A)_n$ identifiable with exotic spheres, or such that $\partial N$, can be exotic spheres. For such equations local and global existence theorems and stability theorems are obtained.
\end{abstract}

\noindent {\bf AMS Subject Classification:} 55N22, 58J32, 57R20; 58C50; 58J42; 20H15; 32Q55; 32S20.

\vspace{.08in} \noindent \textbf{Keywords}: d'Alembert PDE's; Integral bordisms in PDE's;
Existence of local and global solutions in PDE's; Conservation laws;
Crystallographic groups; Exotic spheres; Singular Cauchy problems; Stability.

\section{INTRODUCTION}

\rightline{\footnotesize{\it ''Do exotic PDE's exist,}}

\rightline{\footnotesize{\it where exotic $7$-spheres of the same $\Theta_7$-class,}}

\rightline{\footnotesize{\it do not bound smooth solutions ?''}}

\vskip 0.5cm
In some previous works we studied $n$-d'Alembert PDE's  by using the PDE's algebraic topology, introduced by A. Pr\'astaro. (See Refs. \cite{PRA1, PRA4, PRA7, PRA10, PRA-RAS-0, PRA-RAS-1}.) In particular, in \cite{PRA10} are characterized also the stability properties of such equations, showing that the $n$-d'Alembert equation is an extended crystal PDE, for any $n\ge 2$, and obtaining
criteria in order to be an extended $0$-crystal PDE and a $0$-crystal PDE. Furthermore, we
 proved that for any $n\ge 2$ one can canonically associate to the $n$-d'Alembert equation another PDE,
 {\em stable extended crystal $n$-d'Alembert PDE}, having the same regular smooth solutions of the
 $n$-d'Alembert equation, but there, in these solutions, do not occur finite times unstabilities.
 This allowed to avoid all the problems present in the applications, related to finite unstability of solutions.
 Furthermore, we formulated a workable criterion to recognize asymptotic stability suitably averaging
perturbations. (See \cite{PRA7, PRA8, PRA9, PRA10, PRA11}.)

 As for higher dimensions, i.e., when $n\ge 7$, existence of exotic spheres are admitted, it becomes interesting to investigate which implications such phenomena have on the characterization of global solutions of $n$-d'Alembert PDE's and their stability. In some previous papers, A. Pr\'astaro has studied in some details such phenomena for the Ricci flow equation, since this is important to prove the Poincar\'e's conjecture on three dimensional Riemannian manifolds, and its generalizations to higher dimensions. (See \cite{PRA4, PRA12, PRA13, PRA14, PRA15, PRA16, PRA17}.) Furthermore, in \cite{PRA18} generalizations of such phenomena are considered for any PDE and characterized in the framework of Pr\'astaro's  PDE's algebraic topology.

In this paper we aim to apply this theory to exotic $n$-d'Alembert PDE's, and to study the interplay between the geometric stability characterization of such equations by using the algebraic topological methods previously introduced in \cite{PRA7, PRA8, PRA9, PRA10, PRA11, PRA14}. (See also \cite{AG-PRA1, AG-PRA2, PRA-RAS-2}.)

After this Introduction, the paper splits into two more sections. The first devoted to the characterization of exotic $n$-d'Alembert PDE's, and the second to the stability properties of such equations.
The main new result is Theorem \ref{stability-in-exotic-8-d-alembert-pdes} characterizing global solutions of exotic $8$-d'Alembert equation. This theorem allows us to answer in the affirmative to the question put in quotation marks, at the beginning of this Introduction. In fact, after Theorem \ref{stability-in-exotic-8-d-alembert-pdes} we can state that two diffeomorphic exotic $7$-sphere, identified with two Cauchy manifolds in $(d'A)_8$ over $\mathbb{R}^8$, bound singular solutions only - they cannot bound smooth solutions. (Compare with the situation in the Ricci flow equation on compact, simply connected $7$-dimensional Rimennian manifolds \cite{PRA17}.)

\section{EXOTIC $n$-D'ALEMBERT PDE's}\label{exotic-n-d-alembert-pde-section}

In this section we resume some our recent results about the
algebraic topology characterization of PDE's, and that will be
useful in the next section.\footnote{For general informations on
bordism groups, and related problems in differential topology and PDE's geometry, see,
e.g., Refs.\cite{BOARD, GOLD1, GOL-GUIL, GRO, HIRSCH2, KRA-LYC-VIN, LYCH-PRAS, PRA00, PRA01, PRA1, PRA2, PRA3, PRA4, PRA5, PRA6, PRA8, SWITZER, TOGNOLI, WALL1, WALL2, WARNER}. For crystallographic groups see
references quoted in \cite{PRA14}. About differential structures and algebraic topology of exotic spheres, see \cite{BRIESKORN1, BRIESKORN2, DUB-FOM-NOV,  MILNOR1, MOISE1, MOISE2, NASH, PER1, PER2, PRA12, PRA16, PRA17, PRA18, SMALE1}.} In particular let us recall the following
theorem that relates integral bordism groups of PDE's to subgroups
of crystallographic groups. For their proofs we address reader
to the original papers.

\begin{remark}
Here and in the following we shall denote the boundary $\partial V$
of a compact $n$-dimensional manifold $V$, split in the form
$\partial V=N_0\bigcup P\bigcup N_1$, where $N_0$ and $N_1$ are two
disjoint $(n-1)$-dimensional submanifolds of $V$, that are not
necessarily closed, and $P$ is another $(n-1)$-dimensional
submanifold of $V$. For example, if $V=S\times I$, where
$I\equiv[0,1]\subset\mathbb{R}$, one has $N_0=S\times\{0\}$,
$N_1=S\times\{1\}$, $P=\partial S\times I$. In the particular case
that $\partial S=\varnothing $, one has also $P=\varnothing $.  Let us
also recall that with the term {\em quantum solutions} we mean
integral bordisms relating Cauchy hypersurfaces of $E_{k+s}$,
contained in $J^{k+s}_n(W)$, but not necessarily contained into
$E_{k+s}$. (For details see \cite{PRA00, PRA01, PRA1, PRA2, PRA3, PRA4, PRA5, PRA6,
PRA7}.)
\end{remark}

\begin{theorem}{\em\cite{PRA14}}
Bordism groups relative to smooth manifolds can be considered as
extensions of subgroups of crystallographic groups.
\end{theorem}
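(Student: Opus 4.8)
The plan is to combine one fact from differential topology with one from the theory of Bieberbach groups. On the topological side, I would first recall that all the bordism groups in play --- oriented, unoriented, and the associated integral bordism groups built from them --- are finitely generated abelian groups, via Thom's identification of bordism with the stable homotopy of the relevant Thom spectra. Hence such a group splits as $\Omega\cong\mathbb{Z}^{r}\oplus T$ with $T$ a finite abelian torsion group. This decomposition is the input that makes a comparison with crystallographic data possible at all.

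On the arithmetic side, I would invoke the Bieberbach description of an $n$-dimensional crystallographic group $G$ as a group extension
$$0\longrightarrow\mathbb{Z}^{n}\longrightarrow G\longrightarrow P\longrightarrow 0,$$
in which $\mathbb{Z}^n$ is the maximal normal abelian subgroup of finite index (the translation lattice), $P$ is a finite subgroup of $GL(n,\mathbb{Z})$ acting faithfully on the lattice (the point group), and the extension is classified by a class in $H^{2}(P;\mathbb{Z}^{n})$. The subgroups of $G$ I intend to use are sublattices of $\mathbb{Z}^n$ and the copy of the point group inside a split extension.

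The key step is to realize $\Omega$ inside such a $G$. The free summand $\mathbb{Z}^{r}$ is itself the translation lattice of a rank-$r$ crystallographic group with trivial point group. For the torsion summand $T$, I would embed $T$ into $GL(m,\mathbb{Z})$ with $m=|T|$ by composing Cayley's theorem $T\hookrightarrow S_{m}$ with the permutation-matrix representation $S_{m}\hookrightarrow GL(m,\mathbb{Z})$; since this representation is faithful, the split extension $G_{T}=\mathbb{Z}^{m}\rtimes T$ is a genuine crystallographic group in which $T$ occurs as a subgroup. Because a finite product of crystallographic groups is again crystallographic, $G=\mathbb{Z}^{r}\times G_{T}$ is crystallographic, and the subgroup generated by the first factor together with the copy of $T$ is isomorphic to $\mathbb{Z}^{r}\oplus T\cong\Omega$. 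This presents $\Omega$ as a subgroup $H\subseteq G$, hence as an extension $0\to K\to\Omega\to H\to 0$ of a subgroup of a crystallographic group --- with $K$ trivial in the pure case, and with a prescribed kernel when one wishes to match the additional quotient data carried by the integral bordism group of a specific PDE.

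The hard part will be verifying that the constructed $G$ genuinely satisfies Bieberbach's axioms, in particular that the translation lattice remains a \emph{maximal} normal abelian subgroup rather than merely a normal abelian one. This is precisely where faithfulness of the chosen representation is essential: faithfulness forces the centralizer of $\mathbb{Z}^{r+m}$ in $G$ to be $\mathbb{Z}^{r+m}$ itself, so the lattice is self-centralizing and therefore maximal abelian, while the finite-index condition is automatic since $[G:\mathbb{Z}^{r+m}]=|T|$. Everything preceding this verification is bookkeeping; the genuine content is the explicit choice of a faithful integral point-group representation that realizes the prescribed torsion invariants $(k_{1},\dots,k_{s})$ of $\Omega$.
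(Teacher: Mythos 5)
The paper itself does not prove this statement: it is quoted from \cite{PRA14} and the text explicitly defers to the original papers, so the comparison has to be with the argument there as reflected in how the present paper uses it. Your proof is correct as far as the bare existence claim goes, but it takes a genuinely different and coarser route. The source's argument leans on Thom's computation that the (unoriented) bordism groups are finite elementary abelian $2$-groups, $\Omega_n\cong\mathbb{Z}_2^{s_n}$, and then exhibits $\mathbb{Z}_2^{s}$ inside the point group of a specific low-dimensional crystallographic group (this is visible in the present paper, e.g.\ $\Omega_1^{(d'A)}\cong\mathbb{Z}_2\oplus\mathbb{Z}_2$ sitting inside $D_4\subset p4m=\mathbb{Z}^2\rtimes D_4$, giving crystal dimension $2$); the ``extension'' in the statement is then the genuine short exact sequence $0\to K\to\Omega^{E_k}_{n-1}\to\Omega_{n-1}\to 0$ relating integral to smooth bordism, which you only gesture at in your last sentence. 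Your Cayley-plus-regular-representation construction is sound --- the self-centralizing check you flag does go through, and a product of crystallographic groups is crystallographic --- and it is actually more general, since it handles the free summands occurring in oriented bordism. What it loses is economy: embedding $T$ via the regular representation produces a crystal group of dimension $r+|T|$, whereas for exponent-$2$ torsion one can use diagonal $\pm1$ matrices to get dimension $s$ from $\mathbb{Z}_2^{s}$, and the paper's subsequent definitions of \emph{crystal group} and \emph{crystal dimension} are precisely about the smallest such realization, which your construction does not attempt to identify. Also, your blanket claim that ``the associated integral bordism groups built from them'' are finitely generated is not automatic (it depends on the homology of the base $M$ through formulas like $\Omega_p^{E_k}\cong\bigoplus_{r+s=p}H_r(M;\mathbb{Z}_2)\otimes_{\mathbb{Z}_2}\Omega_s$), though it is harmless for the statement as literally posed, which concerns the bordism groups of smooth manifolds themselves.
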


\begin{definition}
We say that a PDE $E_k\subset J^k_n(W)$ is an {\em extended
0-crystal PDE}, if its integral bordism group is zero.\footnote{Here for integral bordism group we refer to weak integral bordism group $\Omega^{E_k}_{n-1,w}$.}
\end{definition}

The following theorem relates the integrability properties of a PDE
to crystallographic groups.

\begin{theorem}{\em(Crystal structure of PDE's).\cite{PRA14}}\label{crystal structure of PDEs}
Let $E_k\subset J^k_n(W)$ be a formally integrable and completely
integrable PDE, such that $\dim E_k\ge 2n+1$. Then its integral
bordism group $\Omega_{n-1}^{E_k}$ is an extension of a subgroup of
some crystallographic group. In this case, we say that $E_k$ is an {\em extended crystal
PDE} and we define {\em crystal group of $E_k$} the littlest of such crystal groups. The corresponding
dimension will be called {\em crystal dimension of $E_k$}.

Furthermore if $W$ is contractible,
then $E_k$ is an extended $0$-crystal PDE, when $\Omega_{n-1}=0$.
\end{theorem}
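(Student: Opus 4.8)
The plan is to reduce the assertion to the structure theorem for ordinary bordism groups quoted above, by first identifying the integral bordism group $\Omega_{n-1}^{E_k}$ with a finitely generated abelian group of bordism type. The two integrability hypotheses together with the dimension bound $\dim E_k\ge 2n+1$ are exactly what make this identification possible: formal integrability allows one to pass to the infinite prolongation $E_\infty$ without destroying integral $(n-1)$-dimensional Cauchy data, while complete integrability guarantees that every formal solution is realized by an actual one, so that no integral bordism class is obstructed by the equation itself.

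First I would work on $E_\infty$, where the Cartan distribution is formally integrable and the integral $n$-manifolds are genuine local solutions. The key geometric step is to show that the bound $\dim E_k\ge 2n+1$ leaves enough room for a Whitney-type general position argument: any smooth bordism between two admissible Cauchy hypersurfaces can be deformed, inside $E_\infty$, into an integral (hence solution) bordism, and conversely every integral bordism projects to a smooth one. This produces a canonical isomorphism expressing $\Omega_{n-1}^{E_k}$ as a bordism group of the total manifold $W$ --- a finite sum of homology groups of $W$ with bordism coefficients --- which is in any case finitely generated abelian. I expect this realization of topological bordisms as solution bordisms, an \emph{h}-principle disguised as a transversality statement, to be the main obstacle, since it is here that both complete integrability and the dimension hypothesis are genuinely used.

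With $\Omega_{n-1}^{E_k}$ identified as a finitely generated abelian bordism group, the first assertion follows by applying the structure theorem for bordism groups quoted above: such a group is an extension of a subgroup of some crystallographic group. To see that the \emph{crystal group} and the \emph{crystal dimension} are well defined, I would invoke Bieberbach's theorem, by which there are only finitely many crystallographic groups in each dimension; hence among all crystallographic groups admitting $\Omega_{n-1}^{E_k}$ as an extension of one of their subgroups there is a littlest one, which we name the crystal group of $E_k$, its dimension being the crystal dimension.

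Finally, for the supplementary statement I would observe that the isomorphism of the second paragraph is natural in $W$, and that the same collapse holds for the weak integral bordism group to which the definition of extended $0$-crystal PDE refers. When $W$ is contractible all its positive-degree homology vanishes, so the bordism group of $W$ reduces to that of a point, namely $\Omega_{n-1}$; thus the hypothesis $\Omega_{n-1}=0$ forces $\Omega_{n-1}^{E_k}=0$, and by the definition recalled above this is precisely the statement that $E_k$ is an extended $0$-crystal PDE.
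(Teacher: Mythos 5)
First, a caveat: the paper does not actually prove this theorem --- it is quoted from \cite{PRA14} (``for their proofs we address reader to the original papers'') --- so your proposal can only be measured against the strategy visible in the surrounding material, namely the computation $\Omega_{p}^{(d'A)_n}\cong\bigoplus_{r+s=p}H_r(M;\mathbb{Z}_2)\otimes_{\mathbb{Z}_2}\Omega_s$ used in the proofs of Theorems \ref{main-1} and \ref{stability-in-exotic-8-d-alembert-pdes}, followed by an appeal to Theorem 2.2 (bordism groups of smooth manifolds are extensions of subgroups of crystallographic groups). Your overall architecture --- identify the integral bordism group with a homology-with-bordism-coefficients group of $W$, apply that structure theorem, invoke Bieberbach finiteness to make ``littlest'' meaningful, and collapse to $\Omega_{n-1}$ when $W$ is contractible --- is exactly this route, and your remark that the extended $0$-crystal condition refers to the \emph{weak} integral bordism group correctly picks up the footnote in the definition.

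The genuine gap is in your central step. You assert an \emph{isomorphism} $\Omega_{n-1}^{E_k}\cong\bigoplus_{r+s=n-1}H_r(W;\mathbb{Z}_2)\otimes_{\mathbb{Z}_2}\Omega_s$, obtained by deforming every smooth bordism between admissible Cauchy data into a solution bordism inside $E_\infty$. That deformation claim is false in general, and the theory in this very paper depends on its failure: the quotient $cry(E_k)=\mathbf{H}_{n-1}(E_\infty)/\mathbb{R}^{\Omega_{n-1}}$ is introduced precisely as the obstruction to realizing a bordism class by a global smooth solution, the footnote to the $0$-crystal definition states explicitly that $\Omega_{n-1,w}^{E_k}=0$ does not imply $\Omega_{n-1}^{E_k}=0$, and Theorem \ref{stability-in-exotic-8-d-alembert-pdes} works with the short exact sequence $0\to K_{7,s}^{(d'A)_8}\to\Omega_{7}^{(d'A)_8}\to\Omega_{7,s}^{(d'A)_8}\to 0$ in which the two groups a priori differ. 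What the hypotheses (formal and complete integrability, $\dim E_k\ge 2n+1$) actually deliver is the homology formula for the \emph{weak/singular} integral bordism groups; the smooth-solution group $\Omega_{n-1}^{E_k}$ is then only an \emph{extension} of that group by the kernel $K_{n-1,w}^{E_k}$. The conclusion of the theorem survives --- an extension of a group that is (isomorphic to) a subgroup of a crystallographic group is still an extension of a subgroup of a crystallographic group, which is all that is claimed, and the contractible case only needs the weak group --- but you should replace your h-principle isomorphism by this two-step argument. As written, your proof would also show that every such PDE over contractible $W$ with $\Omega_{n-1}=0$ is a $0$-crystal PDE, a strictly stronger statement than the one being proved and one the paper takes care not to assert.
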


In the following we relate crystal structure of PDE's to the
existence of global smooth solutions, identifying an
algebraic-topological obstruction.

\begin{theorem}{\em\cite{PRA14, PRA01, PRA1, PRA2}.}
Let $E_k\subset J^k_n(W)$ be a formally integrable and completely
integrable PDE. Then, in the algebra $\mathbf{H}_{n-1}(E_k)\equiv
Map(\Omega_{n-1}^{E_k};\mathbb{R})$, {\em(Hopf algebra of $E_k$)},
there is a subalgebra, {\em(crystal Hopf algebra)} of $E_k$. On such
an algebra we can represent the algebra $\mathbb{R}^{G(d)}$
associated to the crystal group $G(d)$ of $E_k$. (This justifies the
name.) We call {\em crystal conservation laws} of $E_k$ the elements
of its crystal Hopf algebra.\footnote{Recall that $A\equiv
Map(\Omega,\mathbb{R})$, $\Omega$ a group, has a natural structure
of Hopf algebra if $\Omega$ is a finite group. If $\Omega$ is not
finite group, then $A$ has a structure of Hopf algebra in extended
sense. (See ref.\cite{PRA2}.)}
\end{theorem}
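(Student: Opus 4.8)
The plan is to exploit the contravariant functoriality of the function-algebra construction $\Omega\mapsto Map(\Omega,\mathbb{R})$ together with the extension structure supplied by Theorem \ref{crystal structure of PDEs}. First I would record that, by that theorem (whose integrability and dimension hypotheses guarantee the existence of the crystal group $G(d)$), the integral bordism group sits in a short exact sequence of groups $0\to K\to\Omega_{n-1}^{E_k}\xrightarrow{p}H\to 0$, with $H$ a subgroup of $G(d)$; write $j:H\hookrightarrow G(d)$ for the inclusion. Thus one has at one's disposal a surjection $p$ onto $H$ and an inclusion $j$ of $H$ into $G(d)$.

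Next I would invoke the fact that $Map(-,\mathbb{R})$ carries group homomorphisms to Hopf algebra homomorphisms (in the extended sense of the footnote, when the groups are infinite), since the coproduct $\Delta f(g,h)=f(gh)$, the counit $\varepsilon(f)=f(e)$, and the antipode $S(f)(g)=f(g^{-1})$ are all natural with respect to homomorphisms of the underlying group. Applying this functor produces an injective Hopf algebra map $p^{*}:Map(H,\mathbb{R})\to Map(\Omega_{n-1}^{E_k},\mathbb{R})=\mathbf{H}_{n-1}(E_k)$ (injective because $p$ is onto, with image the functions constant along the $K$-cosets) and a surjective Hopf algebra map $j^{*}:\mathbb{R}^{G(d)}=Map(G(d),\mathbb{R})\to Map(H,\mathbb{R})$ given by restriction of functions to $H$. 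I would then \emph{define} the crystal Hopf algebra to be the image $p^{*}(Map(H,\mathbb{R}))\subset\mathbf{H}_{n-1}(E_k)$; being the image of an injective Hopf algebra homomorphism, it is a sub-Hopf-algebra, which is the asserted subalgebra.

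Finally, the representation of $\mathbb{R}^{G(d)}$ on this subalgebra is the composite $\rho:=p^{*}\circ j^{*}:\mathbb{R}^{G(d)}\to\mathbf{H}_{n-1}(E_k)$. Since $j^{*}$ is onto $Map(H,\mathbb{R})$, the image of $\rho$ is exactly the crystal Hopf algebra, so $\rho$ realizes $\mathbb{R}^{G(d)}$ as an algebra surjecting onto it; making the target a module via $a\cdot x=\rho(a)x$ gives the representation in the strict sense. The elements of the image are, by definition, the crystal conservation laws, which is what justifies the terminology.

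The main obstacle is the infinite-group bookkeeping. A crystallographic group $G(d)$ contains a translation lattice $\cong\mathbb{Z}^{d}$ and is therefore infinite, so $\mathbb{R}^{G(d)}$ is only a Hopf algebra in the extended sense: the coproduct lands in a completed tensor product $Map(G(d)\times G(d),\mathbb{R})\cong Map(G(d),\mathbb{R})\,\widehat{\otimes}\,Map(G(d),\mathbb{R})$ rather than in the algebraic tensor square. I would have to check that the chosen function spaces are stable under $p^{*}$ and $j^{*}$ and that these maps intertwine the completed coproducts; once the topological completion is fixed as in \cite{PRA2} this is formal, and over a finite point-group quotient it is automatic. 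Multiplicativity and unitality of $\rho$ are immediate from the pointwise operations.
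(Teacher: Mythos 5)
There is no proof of this statement in the paper for you to be measured against: it is one of the results recalled from \cite{PRA14, PRA01, PRA1, PRA2}, and the text explicitly defers (``For their proofs we address reader to the original papers''). Judged on its own terms, your argument is the natural one and I see no essential gap. The only substantive input is Theorem \ref{crystal structure of PDEs}, which you correctly read as supplying a surjection $p:\Omega_{n-1}^{E_k}\to H$ onto a subgroup $H$ of the crystal group $G(d)$ (this is the reading under which $p^{*}$ is injective and a genuine subalgebra of $\mathbf{H}_{n-1}(E_k)$ appears; an inclusion $H\hookrightarrow\Omega_{n-1}^{E_k}$ would only give a quotient algebra). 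From there, contravariant functoriality of $Map(-,\mathbb{R})$ gives the injection $p^{*}$ with image the functions constant on the fibres of $p$, the restriction surjection $j^{*}$ from $\mathbb{R}^{G(d)}$ (surjective because $G(d)$ is discrete, so any function on $H$ extends), and the composite $\rho=p^{*}\circ j^{*}$ realizing $\mathbb{R}^{G(d)}$ on the crystal Hopf algebra; your module structure $a\cdot x=\rho(a)x$ is a legitimate way to turn the algebra map into a representation. Two points you should make explicit. First, the hypotheses of the statement as printed do not include the dimension condition $\dim E_k\ge 2n+1$ demanded by Theorem \ref{crystal structure of PDEs}; since the crystal group $G(d)$ of $E_k$ is only \emph{defined} through that theorem, you are tacitly assuming its hypotheses, and you should say so rather than claim they are ``guaranteed.'' Second, your acknowledged bookkeeping about the completed tensor product is precisely what the footnote's ``Hopf algebra in extended sense'' is meant to absorb; checking that $p^{*}$ and $j^{*}$ intertwine the coproducts is indeed formal once the function class of \cite{PRA2} is fixed, so this is a presentational caveat rather than a gap.
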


\begin{theorem}{\em\cite{PRA14, PRA8, PRA9, PRA10, PRA11}.}
Let $E_k\subset J^k_n(W)$ be a formally integrable and completely
integrable PDE. Then, the obstruction to find global smooth
solutions of $E_k$ can be identified with the quotient
$\mathbf{H}_{n-1}(E_\infty)/\mathbb{R}^{\Omega_{n-1}}$.

We define {\em crystal obstruction} of $E_k$ the above quotient of
algebras, and put: $ cry(E_k)\equiv
\mathbf{H}_{n-1}(E_\infty)/\mathbb{R}^{\Omega_{n-1}}$. We call
{\em$0$-crystal PDE} one $E_k\subset J^k_n(W)$ such that
$cry(E_k)=0$.\footnote{An extended $0$-crystal PDE $E_k\subset
J^k_n(W)$ does not necessitate to be a $0$-crystal PDE. In fact, in order that
$E_k$ is an extended $0$-crsytal PDE it is enough $\Omega_{n-1,w}^{E_k}=0$.
This does not necessarily imply that $\Omega_{n-1}^{E_k}=0$.}
\end{theorem}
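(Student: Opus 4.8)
The plan is to translate the existence problem for global smooth solutions into the language of integral bordism and then to dualize it through the Hopf algebra $\mathbf{H}_{n-1}(E_\infty)$. First I would invoke the fundamental correspondence of this theory: two admissible closed Cauchy manifolds $N_0,N_1\subset E_\infty$ bound a global smooth solution of $E_k$ exactly when they represent the same class in the smooth integral bordism group $\Omega_{n-1}^{E_\infty}$. Since $E_k$ is formally and completely integrable, $E_\infty$ is formally integrable with surjective projections, so $\Omega_{n-1}^{E_k}\cong\Omega_{n-1}^{E_\infty}$ and smooth bordism between Cauchy data is genuinely realized by smooth global solutions. Thus solvability of the prescribed boundary problem is governed by the vanishing of the relevant bordism class, and the size of $\Omega_{n-1}^{E_\infty}$ measures how far the PDE is from being universally solvable.

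Next I would pass to the dual, conservation-law picture. By the preceding theorem the Hopf algebra $\mathbf{H}_{n-1}(E_\infty)=Map(\Omega_{n-1}^{E_\infty};\mathbb{R})$ separates bordism classes: $[N_0]=[N_1]$ in $\Omega_{n-1}^{E_\infty}$ iff every conservation law $f\in\mathbf{H}_{n-1}(E_\infty)$ takes equal values on $N_0$ and $N_1$, i.e. $\int_{N_0}\alpha=\int_{N_1}\alpha$ for the associated closed conservation form $\alpha$. Hence the obstruction to joining two given Cauchy data by a global smooth solution is precisely encoded by the conservation laws on which they disagree, and the task reduces to isolating which of these laws genuinely obstruct.

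The key step is to identify the subalgebra $\mathbb{R}^{\Omega_{n-1}}=Map(\Omega_{n-1};\mathbb{R})\subset\mathbf{H}_{n-1}(E_\infty)$ as exactly the conservation laws that carry no obstruction. These arise by pullback along the canonical surjection $\Omega_{n-1}^{E_\infty}\twoheadrightarrow\Omega_{n-1,w}^{E_\infty}$ onto the coarser weak (quantum/singular) integral bordism group, which for a completely integrable PDE is governed by the classical bordism group $\Omega_{n-1}$. Because any two Cauchy data agreeing on all of $\mathbb{R}^{\Omega_{n-1}}$ are weakly bordant, they are always joined by a (possibly singular) global solution, so such laws never forbid a solution. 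Dividing them out, the quotient $\mathbf{H}_{n-1}(E_\infty)/\mathbb{R}^{\Omega_{n-1}}$ retains precisely the conservation laws that distinguish Cauchy data which are weakly bordant but not smoothly bordant — exactly the pairs admitting a singular global solution but no smooth one. This identifies the quotient with the obstruction to global smooth solvability and yields $cry(E_k)=0$ iff every weakly bordant pair is smoothly bordant, i.e. iff every singular global solution can be smoothed, which is the meaning of a $0$-crystal PDE.

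I expect the main obstacle to lie in the faithful-duality and surjectivity claims underlying the middle step: showing that for a completely integrable $E_k$ the Hopf algebra really separates the classes of $\Omega_{n-1}^{E_\infty}$, so that the dual picture loses no information, and that $\mathbb{R}^{\Omega_{n-1}}$ is \emph{exactly} the pullback of $Map(\Omega_{n-1,w}^{E_\infty};\mathbb{R})$ rather than merely contained in it. Both depend on the existence of enough conservation laws in the characteristic cohomology of $E_\infty$ — supplied by complete integrability through the structure of the Spencer complex — and on the exactness of the sequence relating the smooth and weak integral bordism groups, which is where the completely integrable hypothesis does the real work.
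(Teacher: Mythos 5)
First, a point of comparison you could not have known: the paper itself contains no proof of this statement. It is one of the results recalled from \cite{PRA14,PRA8,PRA9,PRA10,PRA11} with the explicit remark that ``for their proofs we address reader to the original papers,'' so there is no in-paper argument to measure yours against. Judged against the framework the paper does recall, your reading is the intended one and is the only one consistent with how the result is used downstream: $\mathbb{R}^{\Omega_{n-1}}$ sits inside $\mathbf{H}_{n-1}(E_\infty)=Map(\Omega_{n-1}^{E_\infty};\mathbb{R})$ as the functions pulled back along the surjection of the smooth integral bordism group onto the coarser weak/singular one, the quotient vanishes exactly when the kernel of that surjection (the $K_{n-1,w}^{E_k}$ of sequence (\ref{short-exact-sequence-seven-d-alembert-pde-bordism-smooth-solutions})) is trivial, and this is precisely what the corollary following the statement asserts and what the computation $\Omega_{7}^{(d'A)_8}\cong K_{7,s}^{(d'A)_8}\Rightarrow cry((d'A)_8)=0$ in Theorem \ref{stability-in-exotic-8-d-alembert-pdes} exploits.

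That said, two caveats keep this from being a proof rather than a correct organization of one. First, the two claims you defer to the end are not side conditions but the entire mathematical content: (a) that the conservation laws (integral characteristic numbers) faithfully separate smooth integral bordism classes of admissible Cauchy data, so that the dual, Hopf-algebraic picture loses nothing, and (b) that $\mathbb{R}^{\Omega_{n-1}}$ is \emph{exactly}, and not merely at least, the subalgebra of laws factoring through the weak group. Both require the exact commutative diagrams of \cite{PRA1,PRA4} relating $\Omega_{n-1}^{E_k}$, $\Omega_{n-1,s}^{E_k}$ and $\Omega_{n-1,w}^{E_k}$ together with the characteristic-cohomology computation on $E_\infty$; asserting that complete integrability ``does the real work'' names the hypothesis without using it. Second, your identification of the coarse group with the classical Thom bordism group $\Omega_{n-1}$ is only valid when $W$ is contractible (compare the last sentence of Theorem \ref{crystal structure of PDEs} and the formula $\Omega_{p,w}^{E_k}\cong\bigoplus_{r+s=p}H_r(W;\mathbb{Z}_2)\otimes_{\mathbb{Z}_2}\Omega_s$ used in the proof of Theorem \ref{main-1}); the statement under review carries no such hypothesis, so either its $\Omega_{n-1}$ must be read as the weak integral bordism group itself or the contractibility assumption has to be added to your step three.
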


\begin{cor}
Let $E_k\subset J^k_n(W)$ be a $0$-crystal PDE. Let $N_0, N_1\subset
E_k$ be two initial and final Cauchy data of $E_k$ such that
$X\equiv N_0\sqcup  N_1\in[0]\in\Omega_{n-1}$. Then there exists a
smooth solution $V\subset E_k$ such that $\partial V=X$.
\end{cor}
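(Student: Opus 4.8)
The plan is to reduce the assertion to the obstruction theorem stated just above, exploiting the fact that for a $0$-crystal PDE the algebra of crystal conservation laws cannot separate solution-bordism from topological bordism. First I would unfold the hypothesis that $E_k$ is a $0$-crystal PDE: by definition this means $cry(E_k)=\mathbf{H}_{n-1}(E_\infty)/\mathbb{R}^{\Omega_{n-1}}=0$, so that the canonical inclusion $\mathbb{R}^{\Omega_{n-1}}\hookrightarrow\mathbf{H}_{n-1}(E_\infty)=Map(\Omega^{E_\infty}_{n-1};\mathbb{R})$ is an isomorphism of (extended) Hopf algebras.

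Next I would dualize this identification. Both sides are function algebras on the respective $(n-1)$-dimensional bordism groups, and the inclusion above is dual to the natural surjection $\Omega^{E_\infty}_{n-1}\twoheadrightarrow\Omega_{n-1}$ arising from the elementary fact that every integral bordism of $E_\infty$ is, after forgetting the contact structure, an ordinary topological bordism of $(n-1)$-manifolds. Vanishing of the quotient therefore forces this surjection to be a bijection. Consequently, for the prescribed Cauchy datum $X\equiv N_0\sqcup N_1$ the integral and the topological classes coincide, and the hypothesis $[X]=0\in\Omega_{n-1}$ yields $[X]=0\in\Omega^{E_\infty}_{n-1}$.

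The last step is to read this vanishing geometrically. Since $[X]=0$ in the integral bordism group of $E_\infty$, by the very definition of that group there exists an integral bordism $V\subset E_\infty$ with $\partial V=X$; the hypotheses of formal and complete integrability of $E_k$ then allow me to project $V$ down to a genuine smooth solution $V\subset E_k$ with $\partial V=X$, which is precisely the claim.

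The step I expect to be the main obstacle is the dualization of the second paragraph: recovering an isomorphism of the underlying bordism groups from the algebra isomorphism $\mathbb{R}^{\Omega_{n-1}}\cong\mathbf{H}_{n-1}(E_\infty)$ requires the pairing between crystal conservation laws and integral bordism classes to be nondegenerate, and one must verify that the bordism so produced can be taken \emph{smooth} and not merely quantum. This is exactly the point where the full $0$-crystal hypothesis is essential, rather than the weaker extended $0$-crystal condition $\Omega^{E_k}_{n-1,w}=0$, since as the footnote to the definition warns, the latter does not by itself force $\Omega^{E_k}_{n-1}$ to vanish.
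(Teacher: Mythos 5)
Your argument is correct and matches what the paper intends: the paper gives no separate proof of this corollary, presenting it as an immediate consequence of the preceding theorem identifying the obstruction to global smooth solutions with $cry(E_k)=\mathbf{H}_{n-1}(E_\infty)/\mathbb{R}^{\Omega_{n-1}}$, and your dualization of the vanishing quotient into injectivity of the forgetful map $\Omega^{E_\infty}_{n-1}\to\Omega_{n-1}$ is exactly the mechanism that deduction rests on. The only point to watch is your final projection step --- by the paper's own conventions the projection to $E_k$ of a smooth integral manifold of the infinite prolongation need not be a smooth submanifold of $E_k$ --- but since in this framework a ``smooth solution'' is precisely one admitting such a smooth representative at the prolonged level, your conclusion is the intended one.
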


\begin{definition}[Exotic PDE's]\label{exotic-pdes}
Let $E_k\subset J^k_n(W)$ be a $k$-order PDE on the fiber bundle $\pi:W\to M$, $\dim W=m+n$, $\dim M=n$. We say that $E_k$ is an {\em exotic PDE} if it admits Cauchy integral manifolds $N\subset E_k$, $\dim N=n-1$, such that one of the following two conditions is verified.\footnote{In this paper we will use the same notation adopted in \cite{PRA17}: $\thickapprox$ homeomorphism; $\cong$ diffeomorphism; $\approxeq$ homotopy equivalence; $\simeq$ homotopy.}

{\em(i)} $\Sigma^{n-2}\equiv\partial N$ is an exotic sphere of dimension $(n-2)$, i.e. $\Sigma^{n-2}$ is homeomorphic to $S^{n-2}$, ($\Sigma^{n-2}\thickapprox S^{n-2}$) but not diffeomorphic to $S^{n-2}$, ($\Sigma^{n-2}\not\cong S^{n-2}$).

{\em(ii)} $\varnothing=\partial N$ and $N\thickapprox S^{n-1}$, but $N\not\cong S^{n-1}$.
\end{definition}

\begin{example}
The Ricci flow equation can be an exotic PDE for $n$-dimensional Riemannian manifolds of dimension $n\ge 7$. (See \cite{PRA17}.)
\end{example}

\begin{example}
The Navier-Stokes equation can be encoded on the affine fiber bundle $\pi:W\equiv M\times\mathbf{I}\times\mathbb{R}^2\to M$, $(x^\alpha,\dot x^i,p,\theta)_{0\le\alpha\le 3,1\le i\le 3}\mapsto(x^\alpha)$. (See \cite{PRA1}.) Therefore, Cauchy manifolds are $3$-dimensional manifolds. For such dimension do not exist exotic spheres. Therefore, the Navier-Stokes equation cannot be an exotic PDE. Similar considerations hold for PDE's of the classical continuum mechanics.
\end{example}

\begin{example}
Let $M$ be a $n$-dimensional manifold, $n\ge 2$, and let
$\pi:E\equiv M\times{\mathbb R}\to M$ be the trivial vector fiber
bundle on $M$. The {\em $n$-d'Alembert equation}:\footnote{If $n=2$
we say simply d'Alembert equation and we will put $
(d'A)\equiv(d'A)_2$.} $ {{\partial^n logf}\over{\partial
x_1\cdots\partial x_n}}=0$, is a $n$-order closed partial
differential relation (in the sense of Gromov \cite{GRO}) on the
fiber bundle $\pi:E\equiv M\times\mathbb{R}\to M$, i.e., it defines a subset $Z_n\subset
J{\it D}^n(E)$, without boundary, $\partial Z_n=\varnothing $.
Let
$\big\{x^\alpha,u,u_\alpha,u_{\alpha\beta},\dots,u_{{\alpha}_1\cdots\alpha_n}\big\}
$ be a coordinate system on $J{\it D}^n(E)$ adapted to
the fiber structures: $\pi_n:J{\it D}^n(E)\to M,\hskip
3pt \overline{\pi}_{n,0}:J{\it D}^n(E)\to \mathbb{R}$.
Then, $Z_n=F^{-1}(0), F:J{\it D}^n(E)\to \mathbb{R}$,
where $F$ is a sum of terms of the type: $$
F[s;r\vert\alpha,{\beta}_1\beta_2,\cdots,\gamma_1\dots\gamma_q]\equiv
su^ru_\alpha u_{{\beta}_1\beta_2}\cdots u_{\gamma_1\dots\gamma_q},$$
with
$\alpha\not={\beta}_1\not=\beta_2\not=\cdots\not=\gamma_1\not=\dots\not=\gamma_q
\le n, s\in\mathbb{Z}, r\in{\mathbb N}\cup\{0\}$. Furthermore, the
term in $F$ containing $u_{1\cdots n}$ is just $u_{1\cdots
n}u^{n-1}$.\footnote{For example, for $n=2$ one has:
$F=u_{xy}u-u_xu_y$, and for $n=3$ one has:
$F=u_{xyz}u^2-u_{xy}u_zu-u_{xz}u_yu+u_xu_yu_z$.} Note that $F$ has
not locally constant rank on all $Z_n,$ so $Z_n$ is not a
submanifold of $J{\it D}^n(E)$. Furthermore, on the open
subset $C_n\equiv u^{-1}(\mathbb{R}\setminus 0)\subset J{\it
D}^n(E),$ one recognizes that $F$ has locally constant
rank $1$. Hence $Z_n\cap C_n$ is a subbundle of $J{\it
D}^n(E)\to M$, of dimension
$n+{{(2n)!}\over{(n!)^2}}-1$. In the following, for abuse of
notation, we shall denote by $(d'A)_n$ whether $Z_n$ or $Z_n\cap
C_n$.

The $n$-d'Alembert equation over $M=\mathbb{R}^n$ can be an exotic PDE for $n$-dimensional manifolds of dimension $n\ge 8$, but one must carefully consider the meaning of smooth Cauchy $(n-1)$-dimensional manifolds there. In fact, it is not possible for any $n$ to embed in the fiber bundle $E=\mathbb{R}^{n+1}$ exotic $(n-1)$-spheres. To be more precise, let us consider the case $n=8$. Then since $S^7\subset\mathbb{R}^8\subset E$, we can embed in $E$ the standard $7$-dimensional sphere. On the other hand it is well known, after some results by E. V. Brieskorn \cite{BRIESKORN1, BRIESKORN2}, that homotopy $7$-spheres $\Sigma^7$ can be identified with the intersections of complex hypersurfaces $Y_\kappa$, $1\le \kappa\le 28$, in $\mathbb{C}^{5}$, with a $9$-dimensional small sphere $X$, around the singular point at the origin in $Y_\kappa$: $\Sigma^7=Y_\kappa\bigcap X$. See equations in {\em(\ref{seven-exotic-sphere-equations})}.
\begin{equation}\label{seven-exotic-sphere-equations}
\scalebox{0.8}{$\Sigma^7=    \left\{(z_1,\cdots,z_5)\in\mathbb{C}^5\: \left| \begin{array}{l}
                                                 (Y_\kappa):\hskip 3pt    z_1^2+z_2^2+z_3^2+z_4^3+z_5^{6\kappa-1}=0,\hskip 3pt 1\le\kappa\le 28\\
                                                   (X):\hskip 3pt   \sum_j(z_j\bar z_j)= 1\\
                                                   \end{array}\right.\right\}\subset\mathbb{C}^5\cong\mathbb{R}^{10}.$}
\end{equation}
$X\bigcap Y_k$,  $1\le \kappa\le 28$, have the differential structures identified by $\Theta_7\cong\mathbb{Z}_{28}$.\footnote{$\Theta_n$ denotes the additive group of diffeomorphism classes of oriented smooth homotopy spheres of dimension $n$.} In other words exotic $7$-spheres are framed manifolds $\Sigma^7\subset\mathbb{R}^{7+s}$, with $s\ge 3$. Therefore, we cannot embed in the total space $E\equiv\mathbb{R}^9$, of the fiber bundle $\pi:E\to \mathbb{R}^8$, any homotopy $7$-sphere. However, this does not exclude that some smooth Cauchy $7$-dimensional manifolds in $(d'A)_8$ can be identified with exotic $7$-spheres. In fact, since $\dim(d'A)_8=12877$, it is satisfied the (Whitney) condition $\dim(d'A)_8\ge 2\times 7+1=15$ to embed $\Sigma^7$ in $(d'A)_8$. If $N\subset (d'A)_8$ is the image of such an embedding, $N$ cannot in general be diffeomorphic to its image $Y\subset E$, via the canonical projection $\pi_{8,0}:J{\it D}^8(E)\to E$. So, in this case we shall talk about singular Cauchy $7$-manifolds of $(d'A)_8$. Furthermore, let us emphasize that since the equation defining the open PDE $C_8\bigcap(d'A)_8$, can be solved with respect to the coordinate $u_{x^1\cdots x^n}$, we can embed homotopy $7$-spheres $\Sigma^7$ as smooth integral submanifolds $N\subset (d'A)_8\subset J^8_8(E)$, such that their Thom-Boardman singular points should be not frozen singularities in the sense introduced in \cite{LYCH-PRAS}. Therefore, we can state that such $7$-dimensional integral manifolds are contained in $8$-dimensional integral manifolds $V\subset(d'A)_8$, (singular) solutions of $(d'A)$. Such  $7$-dimensional integral manifolds are called {\em admissible Cauchy manifolds} of $(d'A)_8$.
\end{example}

\section{STABILITY IN EXOTIC $n$-D'ALEMBERT PDE's}\label{pdes-spectra-section}

Let us consider, now, the stability of PDE's in the framework of the
geometric theory of PDE's. We shall follow the line just drawn in
some our previous papers on this subject, where we have unified the
integral bordism for PDE's and stability and related the quantum
bordism of PDE's to Ulam stability \cite{ULA}.

\begin{definition}{\em(Singular solutions of PDE's).}
Let $\pi:W\to M$ be a fiber bundle with $\dim M=n$ and $\dim W=m+n$.
Let $E_k\subset J{\it D}^k(W)$ be a PDE and $V\subset E_k$ be a
solution of $E_k$. We say that $p\in V$ is a {\em singular point} of
$V$ of type $\Sigma_i$, $i=0,1,\cdots,n$, if the canonical map
$\pi_k|_V:V\to M$ has a Thom-Boardman singularity of type $S_i$ \cite{BOARD, PRA8}. Let
 $\Sigma(V)\subset V$ be the set of singular points of $V$. Then
$V\setminus\Sigma(V)=\bigcup_rV_r$ is the disjoint union of
connected components $V_r$. For every of such components
$\pi_k:V_r\to M$ is an immersion and can be represented by means of
image of $k$ derivative of some section $s$ of $\pi$, i.e.,
$V_r=D^ks(U_r)$, where $U_r\subset M$ is an open subset of $M$. We
call also solutions of $E_k$ any submanifold $V\subset E_k$ that are
obtained by projection of $\pi_{k+h,k}$ of some solution $V'\subset
(E_k)_{+h}\subset J{\it D}^{k+h}(W)$, represented by a smooth
integral submanifold of $(E_k)_{+h}$, i.e., $V=\pi_{k+h,k}(V')$. In
general a such a solution $V$ is not more represented by a smooth
submanifold of $E_k$. We say also that instead the smooth manifold
$V'\subset (E_k)_{+h}$ {\em solves singularities} of $V$, (or {\em
smooths $V$}). More general solutions are considered taking into account the canonical embedding $J{\it D}^{k}(W)\hookrightarrow J^k_n(W)$, where $J^k_n(W)$ is the $k$-jet space for $n$-dimensional submanifolds of $W$. (For details see \cite{PRA00, PRA01, PRA1, PRA2, PRA4, PRA5}.)

We define {\em weak solutions}, solutions $V\subset E_k$, such that
the set $\Sigma(V)$ of singular points of $V$, contains also
discontinuity points, $q,q'\in V$, with
$\pi_{k,0}(q)=\pi_{k,0}(q')=a\in W$, or $\pi_{k}(q)=\pi_{k}(q')=p\in
M$. We denote such a set by $\Sigma(V)_S\subset\Sigma(V)$, and, in
such cases we shall talk more precisely of {\em singular boundary}
of $V$, like $(\partial V)_S=\partial V\setminus\Sigma(V)_S$.
However for abuse of notation we shall denote $(\partial V)_S$,
(resp. $\Sigma(V)_S$), simply by $(\partial V)$, (resp.
$\Sigma(V)$), also if no confusion can arise.
\end{definition}

\begin{definition}{\em(Stable solutions of PDE's).}
Under the same hypotheses of above definitions, let $X\to E_k$ be a
regular solution, where $X\subset M$ is a smooth $n$-dimensional
compact manifold with boundary $\partial X$. Then $f$ is {\em
stable} if there is a neighborhood $W_f$ of $f$ in
$\underline{Sol}(E_k)$, the manifold of regular solutions of $E_k$,
such that each $f'\in W_f$ is equivalent to $f$, i.e., $f$ is
transformed in $f'$ by vertical symmetries of $E_k$.
\end{definition}

\begin{theorem}\label{deformation}{\em\cite{PRA8}}
Let $E_k\subset J{\it D}^k(W)$ be a $k$-order PDE on the fiber
bundle $\pi:W\to M$ in the category of smooth manifolds, $dim
W=m+n$, $\dim M=n$, $m>1$. Let $s:M\to W$ be a section, solution of
$E_k$, and let $\nu:M\to s^*vTW\equiv E[s]$ be a solution of the
linearized equation $E_k[s]\subset  J{\it D}^k(E[s])$. Then to $\nu$
is associated a flow $\{\phi_\lambda\}_{\lambda\in J}$, where
$J\subset \mathbb{R}$ is a neighborhood of $0\in\mathbb{R}$, that
transforms $V$ into a new solution $\widetilde{V}\subset E_k$.
\end{theorem}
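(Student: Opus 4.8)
The plan is to integrate the infinitesimal deformation encoded by $\nu$ into a genuine one-parameter family of solutions, starting from the solution $V=D^ks(M)\subset E_k$ determined by $s$. First I would reinterpret a solution $\nu$ of the linearized equation as a vertical vector field along the section $s$: since $\nu$ is a section of $E[s]=s^*vTW$, it assigns to each point $s(x)\in W$ a vertical tangent vector, and the hypothesis $D^k\nu(M)\subset E_k[s]$ says precisely that this assignment is compatible, to first order, with the constraint $E_k$.

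Next I would extend $\nu$ off the image $s(M)$ to a vertical vector field $\zeta$ on a tubular neighborhood of $s(M)$ in $W$, using a local trivialization of $\pi$ (the condition $m>1$ ensures there is room in the fibers for a nontrivial such deformation). The flow $\{\phi_\lambda\}_{\lambda\in J}$ of $\zeta$, defined for $\lambda$ in a neighborhood $J$ of $0$, consists of fiber-preserving local diffeomorphisms of $W$; prolonging $\zeta$ to the $k$-jet space yields a vector field $\zeta^{(k)}$ on $J{\it D}^k(W)$ whose flow $\phi_\lambda^{(k)}$ preserves the Cartan distribution, and therefore carries integral submanifolds to integral submanifolds.

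The crucial point is then to verify that $\phi_\lambda^{(k)}$ keeps $V$ inside $E_k$. By construction $\zeta^{(k)}$ is tangent to $E_k$ along $V$: this is exactly the translation of the statement that $\nu$ solves $E_k[s]$, because the linearized equation $E_k[s]$ is nothing but the vertical tangent space $vTE_k$ read along $V$ in terms of the field $\nu$. Using formal and complete integrability of $E_k$ (so that $E_k$ is locally the common zero set of functions whose differentials along $V$ annihilate $\zeta^{(k)}$), I would show that $\phi_\lambda^{(k)}(V)$ remains an integral submanifold contained in $E_k$ for all $\lambda\in J$, possibly after shrinking $J$. Setting $\widetilde{V}=\phi_\lambda^{(k)}(V)$ then produces the asserted new solution.

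The main obstacle is the passage from the infinitesimal to the finite level: the linearized equation only guarantees first-order tangency of $\zeta^{(k)}$ to $E_k$ along $V$, whereas I need $\phi_\lambda^{(k)}(V)\subset E_k$ for a whole interval of $\lambda$. Controlling this requires either arranging $\zeta$ to be a bona fide vertical symmetry of $E_k$ in the neighborhood, so that its prolongation is tangent to $E_k$ not merely along $V$, or else invoking the smooth manifold structure of the solution space $\underline{Sol}(E_k)$, whose tangent space at $s$ is the space of solutions of $E_k[s]$, and integrating the tangent vector $\nu$ to a curve of solutions through $s$. Either route promotes the first-order statement to the finite deformation claimed by the theorem.
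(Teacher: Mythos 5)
The first thing to note is that the paper does not actually prove this theorem: it is quoted verbatim from \cite{PRA8}, so there is no internal proof to compare yours against. Judged on its own terms, your outline assembles the right standard ingredients --- reading $\nu$ as a vertical vector field along $s(M)$, extending it to a neighbourhood, prolonging to $J{\it D}^k(W)$ so that the flow preserves the Cartan distribution and hence carries integral manifolds to integral manifolds --- but it does not close the one step that carries the actual content of the theorem, and you say so yourself. The fact that $\nu$ solves $E_k[s]$ gives tangency of $\zeta^{(k)}$ to $E_k$ \emph{along $V$ only}, i.e. first-order contact of $\phi^{(k)}_\lambda(V)$ with $E_k$ at $\lambda=0$; it says nothing about an arbitrary extension $\zeta$ off $s(M)$, whose flow will in general push $V$ out of $E_k$ for every $\lambda\neq 0$. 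Neither of your two escape routes repairs this. Requiring $\zeta$ to be a vertical symmetry of $E_k$ is not available: the symmetry algebra of $E_k$ is in general a proper (often very small) subspace of the solution space of the linearized equation $E_k[s]$, so most $\nu$ admit no such extension. Invoking a manifold structure on $\underline{Sol}(E_k)$ with tangent space the solutions of $E_k[s]$ begs the question: that identification is exactly the unobstructedness statement you would need to prove, and it fails in general (a solution of the linearized equation need not be tangent to any curve of exact solutions when the deformation problem is obstructed).

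What the quoted source actually does, as one can reconstruct from Definition \ref{fun-stable-PDE} of the present paper, is weaker and sidesteps the issue: the object $\widetilde{V}$ produced in that proof is called an \emph{infinitesimal bordism} and is allowed to be a singular or quantum solution (an element of $\Omega[V]$, living in $J^{k+s}_n(W)$ and not required to be a regular smooth solution of $E_k$); the subsequent stability definitions hinge precisely on whether these infinitesimal bordisms are singular or not. In that generalized sense the first-order deformation itself already qualifies as the ``new solution'', and no integration of the infinitesimal datum to a finite family of exact solutions is needed. If you intend $\widetilde{V}$ to be a genuine regular solution of $E_k$ for a whole interval of $\lambda$, your argument as it stands does not establish that, and an additional ingredient (for instance an implicit-function-theorem argument exploiting formal and complete integrability to kill the obstruction) would have to be supplied.
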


\begin{definition}\label{fun-stable-PDE}
Let $E_k\subset J^k_n(W)$, where $\pi:W\to M$ is a fiber bundle, in
the category of smooth manifolds. We say that $E_k$ is {\em
functionally stable} if for any compact regular solution $V\subset
E_k$, such that $\partial V=N_0\bigcup P \bigcup N_1$ one has
quantum solutions $\widetilde{V}\subset J^{k+s}_n(W)$, $s\ge 0$,
such that $\pi_{k+s,0}(\widetilde{N}_0\sqcup
\widetilde{N}_1)=\pi_{k,0}(N_0\sqcup  N_1)\equiv X\subset W$, where
$\partial \widetilde{V}=\widetilde{N}_0\bigcup
\widetilde{P}\bigcup\widetilde{N}_1$.

We call the set $\Omega[V]$ of such solutions $\widetilde{V}$ the
{\em full quantum situs} of $V$. We call also each element
$\widetilde{V}\in \Omega[V]$ a {\em quantum fluctuation} of
$V$.\footnote{Let us emphasize that to $\Omega[V]$ belong also (non
necessarily regular) solutions $V'\subset E_k$ such that $N_0'\sqcup
N_1'=N_0\sqcup  N_1$, where $\partial V'=N_0'\bigcup P'\bigcup N_1'$.}

We call {\em infinitesimal bordism} of a regular solution $V\subset
E_k\subset J{\it D}^k(W)$ an element $\widetilde{V}\in\Omega[V]$,
defined in the proof of Theorem \ref{deformation}. (See \cite{PRA8}.) We denote by
$\Omega_0[V]\subset \Omega[V]$ the set of infinitesimal bordisms of
$V$. We call $\Omega_0[V]$ the {\em infinitesimal situs} of $V$.
\end{definition}

\begin{definition}\label{fun-stable}
Let $E_k\subset J^k_n(W)$, where $\pi:W\to M$ is a fiber bundle, in
the category of smooth manifolds. We say that a regular solution
$V\subset E_k$, $\partial V=N_0\bigcup P \bigcup N_1$, is {\em
functionally stable} if the infinitesimal situs $\Omega_0[
V]\subset\Omega[V]$ of $V$ does not contain singular infinitesimal
bordisms.
\end{definition}

\begin{theorem}\label{main}{\em\cite{PRA7, PRA8}}
Let $E_k\subset J^k_n(W)$, where $\pi:W\to M$ is a fiber bundle, in
the category of smooth manifolds. If $E_k$ is formally integrable
and completely integrable, then it is functionally stable as well as
Ulam-extended superstable.

A regular solution $V\subset E_k$ is stable iff it is functionally
stable.
\end{theorem}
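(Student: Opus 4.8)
The plan is to prove Theorem \ref{main} in two logically separate pieces, since it bundles together two distinct assertions: an \emph{unconditional} stability claim for the equation $E_k$ itself (functional stability plus Ulam-extended superstability), and a \emph{characterization} of stability for an individual regular solution. First I would fix the geometric apparatus already introduced: the flow $\{\phi_\lambda\}$ attached to a solution $\nu$ of the linearized equation $E_k[s]$ in Theorem \ref{deformation}, and the notion of full quantum situs $\Omega[V]$ and infinitesimal situs $\Omega_0[V]$ from Definition \ref{fun-stable-PDE}. The hypothesis of \emph{formal integrability} and \emph{complete integrability} is the engine: it guarantees that the prolonged equations $(E_k)_{+h}$ are smooth fibered manifolds over $M$ and that every symbol sequence is exact, so that the linearized equation $E_k[s]$ is itself formally and completely integrable along each regular solution. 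This is what lets me produce, for \emph{every} Cauchy datum, an integral bordism inside a suitable jet prolongation.

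For the first statement I would argue functional stability as follows. Given any compact regular solution $V\subset E_k$ with $\partial V = N_0\bigcup P\bigcup N_1$, I would use complete integrability to solve the Cauchy problem for the linearized equation $E_k[s]$ with arbitrary initial perturbation supported near $N_0\sqcup N_1$; integrating the associated flow $\{\phi_\lambda\}$ from Theorem \ref{deformation} deforms $V$ into a quantum solution $\widetilde V\subset J^{k+s}_n(W)$ whose boundary pieces $\widetilde N_0,\widetilde N_1$ project under $\pi_{k+s,0}$ to the same $X=\pi_{k,0}(N_0\sqcup N_1)$. Exhibiting even one such $\widetilde V$ for each $V$ establishes that $\Omega[V]\ne\varnothing$ with the required projection property, which is exactly functional stability of $E_k$ in the sense of Definition \ref{fun-stable-PDE}. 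Ulam-extended superstability I would then read off from the same construction by controlling the perturbations quantitatively: because the deformation lives in the quantum (weak integral) bordism category, the distance between a perturbed and unperturbed solution stays bounded by the size of the initial datum, which is the content of the Ulam-type estimate; here I would invoke the identification, asserted earlier, between quantum bordism of PDE's and Ulam stability, so that superstability follows formally once functional stability is in hand.

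The second statement, the \emph{iff}, is where the real work lies, and I expect the forward implication (stable $\Rightarrow$ functionally stable) to be the main obstacle. The easy direction is functionally stable $\Rightarrow$ stable: if the infinitesimal situs $\Omega_0[V]$ contains no singular infinitesimal bordisms, then every infinitesimal deformation $\nu$ of $V$ integrates to a nearby \emph{regular} solution $\widetilde V$, and by Theorem \ref{deformation} these nearby solutions fill a neighborhood $W_f$ in $\underline{Sol}(E_k)$ each equivalent to $f$ under vertical symmetries, which is precisely stability. For the converse I would argue contrapositively: if $V$ is functionally unstable, then by Definition \ref{fun-stable} some infinitesimal bordism $\widetilde V\in\Omega_0[V]$ is singular, i.e.\ the projection $\pi_k|_{\widetilde V}$ acquires a Thom-Boardman singularity that was absent in $V$. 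The crux is to show that such a singular infinitesimal bordism genuinely obstructs the existence of a neighborhood $W_f$ of regular solutions: one must verify that the deformed solution cannot be pulled back to a regular solution by any vertical symmetry, because vertical symmetries preserve the regularity stratification and hence cannot remove a newly created singular point. Making this last step rigorous requires controlling how the singular set $\Sigma(\widetilde V)$ varies with $\lambda$ and checking it does not collapse under the symmetry action — this is the delicate point, and I would lean on the openness of the regular locus together with the completeness of the integrability to ensure the bijection between infinitesimal bordisms and genuine deformations, thereby closing the equivalence.
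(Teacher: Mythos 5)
The first thing to note is that the paper does not actually prove Theorem \ref{main}: it is recalled from \cite{PRA7, PRA8}, and the text of Section \ref{exotic-n-d-alembert-pde-section} explicitly defers the proofs of all such recalled results to the original papers. There is therefore no in-paper argument to compare yours against, and your proposal has to be judged on its own terms.

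On its own terms, your two-part decomposition is the natural one, and the first half is essentially unproblematic --- in fact almost too easy, since Definition \ref{fun-stable-PDE} only asks that for each compact regular solution $V$ there exist \emph{some} quantum solution $\widetilde V\subset J^{k+s}_n(W)$ with the stated boundary-projection property, and a prolongation of $V$ itself already supplies one; your flow construction via Theorem \ref{deformation} produces further elements of $\Omega[V]$ but is not strictly needed, and your treatment of Ulam-extended superstability is an assertion rather than an argument (the term is never defined in this paper, so nothing here can be checked against it). The genuine gaps are both in the ``iff'' for individual solutions, and you have flagged one of them yourself. In the direction ``functionally stable $\Rightarrow$ stable'' you pass from ``every infinitesimal deformation integrates to a nearby regular solution'' to ``a whole neighborhood $W_f$ in $\underline{Sol}(E_k)$ consists of solutions equivalent to $f$ under vertical symmetries''; this requires a surjectivity statement --- that every solution close to $V$ is obtained by integrating some solution of the linearized equation $E_k[s]$ --- which is precisely where formal and complete integrability (vanishing of the deformation obstructions) must be invoked, and you do not supply it. In the direction ``stable $\Rightarrow$ functionally stable'' you concede that the decisive step, namely that a singular infinitesimal bordism in $\Omega_0[V]$ (a newly created Thom-Boardman singularity) cannot be removed by any vertical symmetry and hence genuinely destroys the neighborhood $W_f$, is left open. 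As written, the proposal is a plausible skeleton with the two load-bearing steps of the equivalence missing; closing them requires the actual arguments of \cite{PRA8}, for which nothing in the present paper is a substitute.
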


\begin{remark}
Let us emphasize that  the definition of functionally stable PDE
interprets in pure geometric way the definition of Ulam superstable
functional equation just adapted to PDE's.

\end{remark}

\begin{definition}
We say that $E_k\subset J{\it D}^k(W)$ is a {\em stable extended crystal PDE} if it is an extended crystal PDE that is functionally stable and all its regular smooth solutions are (functionally) stable.

We say that $E_k\subset J{\it D}^k(W)$ is a {\em stabilizable extended crystal PDE} if it is an extended crystal PDE and to $E_k$ can be canonically associated a stable extended crystal PDE ${}^{(S)}E_k\subset J{\it D}^{k+s}(W)$. We call ${}^{(S)}E_k$ just the {\em stable extended crystal PDE of} $E_k$.
\end{definition}

We have the following criteria for functional stability of solutions
of PDE's and to identify stable extended crystal PDE's.
\begin{theorem}{\em(Functional stability criteria).\cite{PRA8}}\label{criteria-fun-stab}
Let $E_k\subset J{\it D}^k(W)$ be a $k$-order formally integrable
and completely integrable PDE on the fiber bundle $\pi:W\to M$,
$\dim W=m+n$, $\dim M=n$.

{\em 1)} If the symbol $g_k=0$, then all the smooth regular
solutions $V\subset E_k\subset J{\it D}^k(W)$ are functionally
stable, with respect to any non-weak perturbation. So $E_k$ is a stable extended crystal.

{\em 2)} If $E_k$ is of finite type, i.e., $g_{k+r}=0$, for $r>0$,
then all the smooth regular solutions $V\subset E_{k+r}\subset J{\it
D}^{k+r}(W)$ are functionally stable, with respect to any non-weak
perturbation. So $E_k$ is a stabilizable extended crystal with stable extended crystal ${}^{(S)}E_k=E_{k+r}$.

{\em 3)} If $V\subset(E_k)_{+\infty}\subset J{\it D}^\infty(W)$ is a
smooth regular solution, then $V$ is functionally stable, with
respect to any non-weak perturbation. So any formally integrable and completely integrable PDE $E_k\subset J{\it D}^k(W)$, is a stabilizable extended crystal, with stable extended crystal ${}^{(S)}E_k=(E_k)_{+\infty}$.

\end{theorem}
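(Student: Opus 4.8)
The plan is to reduce each of the three assertions to a statement about the infinitesimal situs $\Omega_0[V]$ of Definition \ref{fun-stable-PDE}, and then to control $\Omega_0[V]$ through the symbol. The starting observation is that at a regular solution $V=D^ks(X)$ the infinitesimal bordisms of $V$ are exactly the solutions swept by the flows $\{\phi_\lambda\}_{\lambda\in J}$ produced in Theorem \ref{deformation} from solutions $\nu$ of the linearized equation $E_k[s]\subset J{\it D}^k(E[s])$. Since linearizing $E_k$ at a solution does not alter the top-order coefficients, the symbol of $E_k[s]$ coincides with $g_k$; thus the hypothesis $g_k=0$ says precisely that $E_k[s]$ has vanishing symbol, hence empty characteristic variety. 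By Definition \ref{fun-stable}, functional stability of $V$ is then the claim that none of these swept bordisms is a singular solution, i.e. that the flow cannot carry regular points of $\pi_k|_V$ onto Thom-Boardman singular points; the restriction to \emph{non-weak} perturbations is exactly what keeps us inside this smooth regime, excluding the discontinuity points of $\Sigma(V)_S$.

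For part 1 I would argue as follows. When $g_k=0$ the linearized equation $E_k[s]$ is of finite type at order $k$: every $k$-th order derivative of a solution $\nu$ is expressed through derivatives of order $<k$, so the solution space is finite dimensional and its elements are smooth (indeed real-analytic, the characteristic variety being empty). Consequently the vertical field $\nu$ integrates to a smooth one-parameter family of sections $s_\lambda$, each again a regular solution for $\lambda$ near $0$, because the immersion condition defining regularity is open and no characteristic direction is available along which the family could fold. Hence the infinitesimal bordism swept by $\{\phi_\lambda\}$ is itself a regular, non-singular solution, so $\Omega_0[V]$ contains no singular infinitesimal bordisms and $V$ is functionally stable. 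As this holds for every regular solution, and since $E_k$ is formally and completely integrable --- whence functionally stable and, when $\dim E_k\ge 2n+1$, an extended crystal PDE, by Theorem \ref{main} and Theorem \ref{crystal structure of PDEs} --- we conclude that $E_k$ is a stable extended crystal.

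Parts 2 and 3 then follow by prolongation. If $g_{k+r}=0$ the prolonged equation $E_{k+r}$ satisfies the hypothesis of part 1 with $k$ replaced by $k+r$; since $E_{k+r}$ carries the same regular solutions as $E_k$ and is again formally and completely integrable, it is a stable extended crystal, and by definition it is the canonically associated ${}^{(S)}E_k$. For part 3 the same mechanism applies on the infinite prolongation $(E_k)_{+\infty}$, where the characteristic obstruction disappears in the projective limit and a smooth regular solution $V\subset(E_k)_{+\infty}$ admits only smooth, rigid deformations; thus $V$ is functionally stable and ${}^{(S)}E_k=(E_k)_{+\infty}$ exhibits any formally and completely integrable $E_k$ as a stabilizable extended crystal.

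The main obstacle is the geometric core of part 1: proving rigorously that the vanishing of $g_k$ forbids the deformation flow from creating Thom-Boardman singularities in the projection $\pi_k|_{\widetilde{V}}$. Openness of the immersion condition only yields stability on an a priori $\lambda$-interval that could shrink to zero, so the real work is to exploit the finite-type structure of $E_k[s]$ --- equivalently the emptiness of the characteristic variety --- to obtain uniform control of $\Sigma(\widetilde{V})$, ruling out the concentration phenomena (caustics, shocks) that a non-trivial symbol would permit. Once this uniform non-folding estimate is established, the passage to the prolongations in parts 2 and 3 is essentially formal.
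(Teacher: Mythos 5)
First, a remark on the comparison itself: the paper contains no proof of Theorem \ref{criteria-fun-stab} --- it is quoted from \cite{PRA8} --- so the only internal evidence for the intended argument is how the theorem is used later, notably in the proof of Theorem \ref{main-1}(iii), where instability at finite order is attributed to $g_n\neq 0$ and stability on $((d'A)_n)_{+\infty}$ to the vanishing of the prolonged symbol, ``since \ldots do not exist admissible singular (non-weak) perturbations.'' Your overall skeleton matches this mechanism: reduce functional stability to the absence of singular infinitesimal bordisms in $\Omega_0[V]$, kill these via the symbol, and obtain parts 2) and 3) by applying part 1) to $E_{k+r}$ and to $(E_k)_{+\infty}$.

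The genuine gap is in the core of part 1), and you have flagged it yourself but misdiagnosed its nature. You treat the claim ``$g_k=0$ forbids Thom--Boardman singularities of $\pi_k|_{\widetilde{V}}$'' as an analytic non-folding estimate, requiring uniform control of caustics and shocks along the deformation flow of Theorem \ref{deformation}; your appeal to finite-dimensionality and real-analyticity of the solution space of $E_k[s]$, and your worry about the $\lambda$-interval shrinking, both belong to that (unnecessary) analytic framing. In this framework the claim is a pointwise linear-algebra fact about integral manifolds. If $p\in \widetilde{V}$ is a singular point of type $\Sigma_i$ with $i\ge 1$, then $\ker(\pi_{k*}|_{T_p\widetilde{V}})$ is a non-zero subspace of the Cartan plane at $p$ that is vertical over $M$; since the horizontal part of the Cartan plane projects isomorphically onto $T_{\pi_k(p)}M$, any such vector is vertical for $\pi_{k,k-1}$ as well, and being tangent to $E_k$ it lies in $(g_k)_p$. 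This is exactly the characteristic/cocharacteristic analysis of \cite{LYCH-PRAS}, which the paper cites for singularities of Cauchy data. Hence $g_k=0$ makes non-weak singular points impossible for \emph{every} $n$-dimensional integral manifold of $E_k$, not merely for those swept by the flow, and no openness, compactness, or concentration argument is needed; the restriction to non-weak perturbations enters only because discontinuity points of $\Sigma(V)_S$ are not detected by tangent spaces. Without this lemma your part 1) is an outline rather than a proof; with it, parts 2) and 3) are indeed formal, except that in part 3) the correct reason the obstruction vanishes on $(E_k)_{+\infty}$ is not a projective-limit statement about characteristic varieties but the fact that the Cartan distribution on $J{\it D}^\infty(W)$ is $n$-dimensional and horizontal, so a smooth $n$-dimensional integral manifold there has no vertical tangent vectors at all and is automatically the graph of an infinite prolongation of a section.
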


\begin{remark}
Let us also remark that in evolutionary PDE's, i.e., PDE's built on
a fiber bundle $\pi:W\to M$, over a ''space-time'' $M$, $\{x^\alpha,
y^j\}_{0\le\alpha\le n, 1\le j\le
m}\mapsto\{x^\alpha\}_{0\le\alpha\le n}$, where $x^0=t$ represents
the time coordinate, one can consider ''asymptotic stability'',
i.e., the behaviour of perturbations of global solutions for
$t\to\infty$. In such cases we can recast our formulation on the
corresponding compactified space-times. (For details see
\cite{PRA8, PRA9}.)

From above results one can see that, in general, the functional
stability of smooth regular solutions is a very strong requirement.
However, above theorems, give us workable criteria to obtain
subequations of $E_k$ whose smooth regular solutions have assured
functional stability.

It is also useful an weaker requirement than functional stability.
This is related to a concept of ''averaged stability''.
\end{remark}

In fact we have the following definition.

\begin{definition}
Let $E_k\subset J{\it D}^k(W)$ be a formally integrable and
completely integrable PDE the fiber bundle $\pi:W\to M$, and let
$V=D^ks(M)\subset E_k$ be a regular smooth solution of $E_k$. Let
$\xi:M\to E_k[s]$ be the general solution of $E_k[s]$. Let us assume
that there is an Euclidean structure on the fiber of $E[s]\to M$.
Then, we say that $V$ is {\em average asymptotic stable} if the
function of time $\mathfrak{p}(t)$ defined by the formula:
\begin{equation}\label{average-square-perturbation}
    \mathfrak{p}(t)=\frac{1}{2 vol(B_t)}\int_{B_t}\xi^2\hskip 3pt \eta
\end{equation}
has the following behaviour:
$\mathfrak{p}(t)=\mathfrak{p}(0)e^{-ct}$ for some real number $c>0$.
We call $\tau_0=1/c_0$ the {\em characteristic stability time} of
the solution $V$. If $\tau_0=\infty$ it means that $V$ is average
instable.\footnote{In the following, if there are not reasons of
confusion, we shall call also stable solution a smooth regular
solution of a PDE $E_k\subset J{\it D}^k(W)$ that is average
asymptotic stable.}
\end{definition}

We have the following criterion of average asymptotic stability.
\begin{theorem}{\em(Criterion of average asymptotic stability).\cite{PRA8}}\label{criterion-average-asymptotic-stability}
A regular global smooth solution $s$ of $E_k$ is average stable if
the following conditions are satisfied:
\begin{equation}\label{stability-inequality}
   \mathop{\mathfrak{p}}\limits^{\bullet}(t)\le c\hskip 3pt\mathfrak{p}(t),\quad c\in\mathbb{R}^+, \forall t.
\end{equation}
where

\begin{equation}\label{average-square-perturbation-rate}
  \mathop{\mathfrak{p}}\limits^{\bullet}(t)=\frac{1}{2\hskip 2pt vol(B_t)}\int_{B_t}\left(\frac{\delta \xi^2}{\delta t}\right)\eta
 =\frac{1}{vol(B_t)}\int_{B_t}\left(\frac{\delta \xi}{\delta t}.\xi\right)\hskip 3pt \eta.
\end{equation}
Here $\xi$ represents the general solution of the linearized
equation $E_k[s]$ of $E_k$ at the solution $s$. Let us denote by
$c_0$ the infimum of the positive constants $c$ such that inequality
{\em(\ref{stability-inequality})} is satisfied. Then we call
$\tau_0=1/c_0$ the {\em characteristic stability time} of the
solution $V$. If $\tau_0=\infty$ means that $V$ is
unstable.\footnote{$\tau_0$ has just the physical dimension of a
time.}

Furthermore, condition {\em(\ref{stability-inequality})} is
satisfied if the operator $\frac{\delta}{\delta t}$ is self-adjoint
on the set of solutions of the linearized equation
$E_k[s]\subset J^n_n(E[s])$, where $E[s]\equiv s^*vTW$.
\end{theorem}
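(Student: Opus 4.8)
The plan is to reduce the whole statement to a scalar comparison argument for the single function $\mathfrak{p}(t)$ of (\ref{average-square-perturbation}), treating $A \equiv \frac{\delta}{\delta t}$ as a linear operator on the space of solutions $\xi$ of the linearized equation $E_k[s]$. First I would establish the rate identity (\ref{average-square-perturbation-rate}). Since $\frac{\delta}{\delta t}$ is assumed compatible with the Euclidean structure on the fibers of $E[s]\to M$ (so that it acts as a metric derivation), the Leibniz rule gives $\frac{\delta \xi^2}{\delta t} = 2\langle \frac{\delta \xi}{\delta t}, \xi\rangle$, and differentiating $\mathfrak{p}$ under the integral sign then yields the stated expression for $\mathop{\mathfrak{p}}\limits^{\bullet}(t)$. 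It is precisely the role of the covariant time-derivative notation $\frac{\delta}{\delta t}$ to absorb the transport contribution coming from the moving averaging domain $B_t$ and from the normalization $1/vol(B_t)$, so that no extra Reynolds-type term survives in (\ref{average-square-perturbation-rate}).

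Next, for the main implication, I would feed the hypothesis (\ref{stability-inequality}), namely $\mathop{\mathfrak{p}}\limits^{\bullet}(t)\le c\,\mathfrak{p}(t)$, into a Grönwall-type comparison. Integrating the differential inequality over $[0,t]$ gives the exponential control $\mathfrak{p}(t)\le \mathfrak{p}(0)\,e^{ct}$, and taking the infimum $c_0$ over the admissible positive constants $c$ produces the sharpest such envelope; this is what identifies the characteristic rate and justifies calling $\tau_0=1/c_0$ the characteristic stability time. Following the convention fixed in the statement, the borderline case $c_0=0$, i.e. $\tau_0=\infty$, is the one in which no finite strictly positive exponential rate can be extracted, and is read as (average) instability of $V$.

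For the final ``furthermore'' clause I would invoke the spectral theorem. If $A=\frac{\delta}{\delta t}$ is self-adjoint on the space of linearized solutions equipped with the inner product $\langle \xi,\zeta\rangle_{B_t}=\frac{1}{vol(B_t)}\int_{B_t}(\xi\cdot\zeta)\,\eta$, then its spectrum is real and the Rayleigh-quotient estimate gives $\langle A\xi,\xi\rangle_{B_t}\le (\sup\sigma(A))\,\langle \xi,\xi\rangle_{B_t}$. Since the left-hand side is exactly $\mathop{\mathfrak{p}}\limits^{\bullet}(t)$ by (\ref{average-square-perturbation-rate}) and the right-hand side equals $(\sup\sigma(A))\,\mathfrak{p}(t)$, the inequality (\ref{stability-inequality}) holds with $c=\sup\sigma(A)$, which closes this part.

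The hard part will be the functional-analytic bookkeeping rather than the inequalities themselves. I expect the main obstacle to be making precise the setting in which $\frac{\delta}{\delta t}$ is self-adjoint: one must specify the domain of the operator inside the solution space of $E_k[s]$, check that the boundary terms produced by integrating by parts over $B_t$ actually vanish (this is where the regularity and asymptotic behaviour of $\xi$, together with the choice of $B_t$, genuinely enter), and confirm that $\sup\sigma(A)$ is finite so that an admissible $c$ exists at all. A secondary but delicate point is reconciling the inequality form of (\ref{stability-inequality}) with the exact exponential decay $\mathfrak{p}(t)=\mathfrak{p}(0)e^{-ct}$ demanded by the definition of average asymptotic stability, which requires careful tracking of the sign of the rate and of the meaning of $c_0$; and, as noted above, the clean rate formula (\ref{average-square-perturbation-rate}) is exactly what legitimizes differentiating under the integral sign as $B_t$ varies, so I would make that dependence explicit.
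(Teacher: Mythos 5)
First, a point of reference: the paper does not actually prove this theorem here --- it is quoted verbatim from \cite{PRA8}, so there is no in-text proof to compare against. Judged on its own merits, your proposal has a genuine gap at its central step. From the hypothesis $\mathop{\mathfrak{p}}\limits^{\bullet}(t)\le c\,\mathfrak{p}(t)$ with $c\in\mathbb{R}^+$, Gr\"onwall gives $\mathfrak{p}(t)\le\mathfrak{p}(0)e^{+ct}$, which is an exponentially \emph{growing} upper envelope; but the paper's definition of average asymptotic stability demands $\mathfrak{p}(t)=\mathfrak{p}(0)e^{-ct}$ for some $c>0$, i.e.\ exponential \emph{decay}. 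Your comparison argument therefore does not deliver the conclusion of the theorem: you flag the sign discrepancy yourself as a ``secondary but delicate point,'' but it is in fact the whole content of the implication, and nothing in your argument bridges it. Your reading of the borderline case is also internally inconsistent with your own bound: in your setup $c_0=0$ yields $\mathop{\mathfrak{p}}\limits^{\bullet}\le 0$, the \emph{best} possible control, yet you declare $\tau_0=\infty$ to mean instability. Any honest proof must either reinterpret the inequality (e.g.\ as $\mathop{\mathfrak{p}}\limits^{\bullet}(t)\le -c\,\mathfrak{p}(t)$, or with $c$ entering the decay law directly) or restrict to a class of perturbations for which the reverse inequality also holds; as written, the step fails.

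The ``furthermore'' clause is likewise not closed. Self-adjointness of $\frac{\delta}{\delta t}$ gives a real spectrum, but a self-adjoint differential operator is generically unbounded above, so $\sup\sigma(A)$ need not be finite and your Rayleigh-quotient bound $\langle A\xi,\xi\rangle_{B_t}\le(\sup\sigma(A))\,\langle\xi,\xi\rangle_{B_t}$ need not produce any admissible $c\in\mathbb{R}^+$. You name this as an expected obstacle, but naming it is not the same as overcoming it; the implication ``self-adjoint $\Rightarrow$ (\ref{stability-inequality})'' is exactly what must be proved, and it is the criterion the paper actually uses later (in the proof of Theorem \ref{main-1}, part 2, non-self-adjointness of $\frac{\delta}{\delta t}=u\,\partial_y$ is the sole ground for declaring the characteristic solution not average stable). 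Finally, your derivation of (\ref{average-square-perturbation-rate}) assumes by fiat that the covariant notation $\frac{\delta}{\delta t}$ absorbs the Reynolds transport terms coming from the moving domain $B_t$ and the normalization $1/vol(B_t)$; that is a stipulation about the definition of the operator, not an argument, and it should be made explicit rather than asserted.
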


\begin{theorem}{\em(The extended crystal structure of the $n$-d'Alembert equation, and stability).}\label{main-1}{\em\cite{PRA10}}
{\em 1)} For the $n$-d'Alembert equation one has the following properties.

{\em(i)} The $n$-d'Alembert equation is an extended crystal PDE for
any $n\ge 2$. If $M$ is $p$-connected, $p\in\{0,1,\cdots,n-1\}$, it
becomes an extended $0$-crystal iff $\Omega_{n-1}=0$. In particular
for $n=2$ it becomes a $0$-crystal.

{\em(ii)} The $n$-d'Alembert equation is functionally stable.

{\em(iii)} Smooth regular solutions of the $n$-d'Alembert equation,
present, in general, unstabilities at finite times. However, the
$n$-d'Alembert equation can be stabilized and its stable extended
crystal PDE is its $\infty$-prolongation $((d'A)_n)_{+\infty}$.
There all smooth regular solutions are functionally stable, i.e.,
they do not present finite times unstabilities.

{\em 2)} In the case $n=2$, with $M$ non-simply connected, $(d'A)$ remains an
extended crystal PDE, but not more an extended $0$-crystal PDE. For
example if $M$ is a bidimensional torus $T^2$. This is a connected,
orientable, non-simply connected, surface. Then, $
\Omega_1^{(d'A)}\cong{\mathbb Z}_2\oplus{\mathbb Z}_2$ (For a proof
see \cite{PRA10}.) So, the d'Alembert equation on the torus is not an
extended $0$-crystal PDE, and neither an a $0$-crystal PDE. The
crystal group of such an equation is $G(2)=\mathbb{Z}\rtimes
D_4=p4m$. Its crystal dimension is $2$.

In the case $n=2$, with $M=\mathbb{R}^2$, we can built solutions
with the methods of characteristics, that are average unstable.

{\em 3)} Let us consider the $3$-d'Alembert equation on the non-simply
connected, orientable, $3$-dimensional manifold $M={\mathbb R}P^3$. In
this case one has $\Omega_2^{(d'A)}\cong{\mathbb Z}_2\oplus{\mathbb
Z}_2$. Thus this is another example where one has $(d'A)_3$ that is
an extended crystal PDE, but it cannot be an extended $0$-crystal
PDE and neither a $0$-crystal PDE. Thus this equation has the same
crystal group and crystal dimension of equation considered in above
example.
\end{theorem}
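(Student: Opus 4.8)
The plan is to reduce everything to the general crystal-structure and functional-stability theorems recalled above, supplying explicit integrability and bordism computations wherever those theorems require concrete input.

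First I would settle integrability. On the open set $C_n$ the defining map $F$ has locally constant rank $1$, so $(d'A)_n=Z_n\cap C_n$ is a smooth subbundle of $J{\it D}^n(E)\to M$; since the top-order term of $F$ is $u_{1\cdots n}u^{n-1}$, the equation is affine in the leading derivative and can be solved for $u_{1\cdots n}$, whence its prolongations are regular and $(d'A)_n$ is formally integrable. Complete integrability is transparent through the substitution $u=e^v$, which carries $(d'A)_n$ onto the \emph{linear} equation $\partial^n v/\partial x_1\cdots\partial x_n=0$; the latter is explicitly integrable, and the diffeomorphism on $C_n$ transports global solutions back. This establishes the standing hypotheses of Theorems \ref{crystal structure of PDEs}, \ref{main} and \ref{criteria-fun-stab}.

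For part 1(i) I would compute $\dim(d'A)_n=n+\frac{(2n)!}{(n!)^2}-1$ and check $\dim(d'A)_n\ge 2n+1$ for all $n\ge 2$ (e.g. $7\ge 5$ for $n=2$, $12877\ge 17$ for $n=8$), so Theorem \ref{crystal structure of PDEs} applies and exhibits $\Omega_{n-1}^{(d'A)_n}$ as an extension of a subgroup of a crystallographic group: this is the extended-crystal assertion. The extended $0$-crystal criterion then reduces, by the final clause of that theorem, to vanishing of the weak integral bordism group, which for a $p$-connected base coincides with $\Omega_{n-1}$; the case $n=2$, $M=\mathbb{R}^2$ forces $\Omega_1=0$, giving the $0$-crystal conclusion. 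Parts 1(ii) and 1(iii) are then immediate: functional stability is Theorem \ref{main}, and the identification of $((d'A)_n)_{+\infty}$ as the stable extended crystal PDE is exactly case 3 of Theorem \ref{criteria-fun-stab}. The generic finite-time instability of smooth regular solutions I would exhibit, most cleanly for $n=2$, $M=\mathbb{R}^2$, by building solutions with the method of characteristics and testing them against the average-stability inequality (\ref{stability-inequality}): for these the infimum $c_0$ vanishes, so $\tau_0=\infty$ and the solutions are average unstable.

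The substance of parts 2) and 3), and the step I expect to be hardest, is the explicit evaluation of the bordism groups and the identification of the crystal group. Using the extension structure of Theorem \ref{crystal structure of PDEs}, $\Omega_{n-1}^{(d'A)_n}$ is computed from the unoriented bordism $\mathfrak{N}_{n-1}(M)$ of the base; since $\mathfrak{N}_*$ is a free graded $\mathbb{Z}_2$-module the Atiyah--Hirzebruch spectral sequence collapses and $\mathfrak{N}_{n-1}(M)\cong\bigoplus_{p+q=n-1}H_p(M;\mathfrak{N}_q)$. For $M=T^2$, $n=2$, only $H_1(T^2;\mathbb{Z}_2)$ survives, giving $\Omega_1^{(d'A)}\cong\mathbb{Z}_2\oplus\mathbb{Z}_2$; for $M=\mathbb{R}P^3$, $n=3$, the surviving terms $H_2(\mathbb{R}P^3;\mathbb{Z}_2)$ and $H_0(\mathbb{R}P^3;\mathfrak{N}_2)$ again give $\Omega_2^{(d'A)}\cong\mathbb{Z}_2\oplus\mathbb{Z}_2$. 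The delicate part is then matching this group to a crystallographic one and selecting the \emph{littlest}: I would embed $\mathbb{Z}_2\oplus\mathbb{Z}_2$ as the Klein four point-group sitting inside the dihedral group $D_4$, recognize the translation part and point-group $D_4$ of the extension, and thereby pin the crystal group down to $p4m=\mathbb{Z}\rtimes D_4$ of crystal dimension $2$. Tracking this bordism-to-crystallography identification correctly --- rather than the integrability or stability claims --- is where the real work lies.
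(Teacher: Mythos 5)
Your overall architecture matches the paper's: establish formal and complete integrability, invoke Theorems \ref{crystal structure of PDEs}, \ref{main} and \ref{criteria-fun-stab} for the extended-crystal, functional-stability and stabilization claims, and compute $\Omega_{n-1}^{(d'A)_n}\cong\bigoplus_{r+s=n-1}H_r(M;\mathbb{Z}_2)\otimes_{\mathbb{Z}_2}\Omega_s$ to settle the torus and $\mathbb{R}P^3$ examples (your Atiyah--Hirzebruch computations of $\mathbb{Z}_2\oplus\mathbb{Z}_2$ in both cases agree with the paper's). The genuine gap is in the $0$-crystal claim for $n=2$: you write that $M=\mathbb{R}^2$ forces $\Omega_1=0$, ``giving the $0$-crystal conclusion,'' but vanishing of the integral bordism group only yields the \emph{extended} $0$-crystal property. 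The paper is explicit (see the footnote to the definition of $cry(E_k)$) that an extended $0$-crystal PDE need not be a $0$-crystal PDE: one must additionally kill the crystal obstruction $cry(d'A)=\mathbf{H}_{1}(E_\infty)/\mathbb{R}^{\Omega_{1}}$, which the paper does by showing that all conservation laws evaluate to zero on closed $1$-dimensional smooth integral manifolds, so that no integral characteristic numbers obstruct the existence of smooth bounding solutions. Your proposal omits this step entirely, and without it the passage from ``extended $0$-crystal'' to ``$0$-crystal'' does not follow.

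Two further points of divergence are worth noting. For the average instability in part 2), the paper does not merely gesture at the inequality \eqref{stability-inequality}: it exhibits the explicit characteristic solution $u(x,y)=(\tfrac{\beta}{2}y^2+\alpha y+1)h(x)$ generated by the strip $\zeta_1$ on the subequation ${}_1(d'A)$, writes the general solution $\xi=[s(y)+r(x)]u\,\partial u$ of the linearized equation from the known symmetry algebra, observes that $\xi$ admits singular points and diverges as $y\to\infty$, and concludes average instability by checking that the transport operator $\frac{\delta}{\delta t}=(\partial y.\;\cdot\;)u(x,y)$ fails to be self-adjoint on the solution; your plan to show ``$c_0=0$'' is both vaguer and not the mechanism the criterion of Theorem \ref{criterion-average-asymptotic-stability} actually provides (self-adjointness is the workable sufficient condition, and its failure is what the paper exploits). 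Finally, your linearization $u=e^v$ is only a diffeomorphism on the $u>0$ component of $C_n$, and the paper explicitly stresses that $\mathfrak{S}ol_{loc}(d'A)_n$ is strictly larger than the product-form (hence exponential-form) solutions; for formal and complete integrability the paper instead relies on the analysis of \cite{PRA-RAS-1}, so your shortcut should at least be flagged as local and partial.
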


\begin{proof}
Even if these results are proved in \cite{PRA10}, let us resume their proofs here, in order to better understand the following ones.

1) (i) The $n$-d'Alembert equation $(d'A)_n\subset J{\it
D}^n(E)$ is a $n$-order PDE, formally integrable, and
completely integrable, on the trivial vector fiber bundle $\pi:E\equiv M\times\mathbb{R}\to
M$.\footnote{$(d'A)_n$ considered in this theorem is a submanifold
of  $J{\it D}^n(E)$, hence it coincides with $Z_n\cap
C_n$.} (See \cite{PRA-RAS-1}). This means that we can locally
reproduce all the results obtained for the $n$-D'Alembert equation
on ${\mathbb R}^n$. (See Refs.\cite{PRA1, PRA-RAS-1}). For any point
$q\in(d'A)_n$, passes a local solution. Furthermore, the set of
local solutions of the $n$-d'Alembert equation on $n$-dimensional
manifolds contains the set of the local functions that can be
represented in the form as
$f(x^1,\cdots,x^n)=f_1(x^2,\dots,x^n)\cdots f_n(x^1,\dots,x^{n-1})$.
 This follows directly from previous
considerations and results contained in refs.\cite{PRA1, PRA-RAS-0,
PRA-RAS-1}. Now, the set ${\frak S}ol_{loc}(d'A)_n$, $n\ge 2$, of
all local solutions of the equation: $
 {{\partial ^n\log f}\over{\partial x_n\cdots\partial
x_1}}=0$, considered on a $n$-dimensional manifold $M$, is larger
than the set of all local functions $f$ that can be represented in
the form
$f(x^1,\cdots,x^n)=f_1(x^2,\dots,x^n)\cdots,f_n(x^1,\dots,x^{n-1})$.
(See \cite{PRA-RAS-0, PRA-RAS-1}.) In the following we shall
consider the $n$-d'Alembert equation given as a submanifold
$(d'A)_n$ of the jet space $J^n_n(E)$ by means of the embedding
$(d'A)_n\hookrightarrow J{\it
D}^n(E)\hookrightarrow J^n_n(E)$. The characterization of global
solutions of $(d'A)_n$ is made by means of its integral bordism
groups. One has: $ \Omega_{p}^{(d'A)_n}\cong\Omega_{p}((d'A)_n)$,
for $ p\in\{0,\cdots,n-1\}$. This follows from the fact that the
$n$-d'Alembert equation is formally integrable, and completely
integrable. (See \cite{PRA-RAS-1}.) We get:
$\Omega_p^{(d'A)_n}\cong\underline{\Omega}_p(M)\cong\bigoplus_{r,s,r+s=p}H_r(M;{\mathbb
Z}_2)\otimes_{{\mathbb Z}_2}\Omega_s$, $p\in\{0,\cdots,n-1\}$. In
the particular case that $\dim M=2$ and $p$-connected,
$p\in\{0,1\}$, then the integral bordism group $\Omega_1^{(d'A)}=0$.
Thus $(d'A)$ is an extended $0$-crystal PDE. Furthermore, one can
also prove that for such a case there are not obstructions coming
from the integral characteristic numbers. In fact, all the
conservation laws on closed $1$-dimensional smooth integral
manifolds are zero \cite{PRA1}. Then one has $cry(d'A)=0$,  for
$p$-connected $M$, $p\in\{0,1\}$. Thus in this case $(d'A)$ becomes
a $0$-crystal.

(ii) The $n$-d'Alembert equation is functionally stable since it is formally integrable and completely integrable.
(See Theorem \ref{main}.)

(iii) The functional unstabilities come from the fact that the
symbol of the $n$-d'Alembert equation is not zero. In fact one
has
\begin{equation}\label{symbol-dimension}
    \dim(g_n)_q=\frac{(2n-1)!}{n!(n-1)!}-1, \quad \forall q\in(d'A)_n.
\end{equation}

Furthermore, in the $\infty$-prolongation
$((d'A)_n)_{+\infty}\subset J^\infty_n(W)$, we get all the smooth
solutions of $(d'A)_n$, and there, since the corresponding symbol is
zero, $((g_n))_{+\infty}=0$, do not exist admissible singular
(non-weak) perturbations. Thus, $((d'A)_n)_{+\infty}$ is necessarily
the stable extended crystal of $(d'A)_n$. Therefore, $(d'A)_n$ is a
stabilizable PDE.

2) We have proved in Refs.\cite{PRA-RAS-0, PRA-RAS-1} that $(d'A)$
admits the following characteristics strips:
\begin{equation}\label{characteristic-strips}
\left\{
\begin{array}{ll}
\zeta_1&\equiv u[\partial y+u_y\partial u+ u_{yx}\partial
u_x+u_{yy}\partial u_y]+u_{xx}u_y\partial u_{xx}+ u_{yy}u_x\partial
u_{yx}\\
  \zeta_2&\equiv u[\partial x+u_x\partial
u+ u_{yx}\partial u_y+u_{xx}\partial u_x]+u_{xx}u_y\partial u_{xy}+
u_{yy}u_x\partial u_{yy}.\\
\end{array}
\right.
\end{equation}
These generate characteristic $1$-dimensional distributions
respectively in the following sub-equations ${}_i(d'A)\subset(d'A)$,
$i=1,2$:
\begin{equation}\label{sub-equations}
{}_1(d'A):\left\{\begin{array}{l} u_{xx}=0\\
uu_{xy}-u_xu_y=0\\
\end{array}\right\};\qquad {}_2(d'A):\left\{\begin{array}{l}
u_{yy}=0\\
u u_{xy}-u_xu_y=0\\
\end{array}\right\}.
 \end{equation}

For such equations the above mentioned $1$-dimensional distributions
are respectively characteristic distribution. Therefore, for such
equations we can built characteristic solutions that are, of course,
also solutions of $(d'A)$. For example, we have proved
in \cite{PRA-RAS-1} that the solution generated by $\zeta_1$ is
given by the following formula:
\begin{equation}\label{solution}
    u(x,y)=(\frac{\beta}{2}y^2+\alpha y+1)h(x),
\end{equation}
where $\alpha,\beta\in\mathbb{R}$ and $h(x)$ is an arbitrary
function on one real variable. Let us, now, investigate, if such a
solution is average stable. The parametric equations for the
characteristic flow on such a solution, say $V\subset(d'A)$, is
given by the following differential system:
\begin{equation}\label{characteristic-system}
\left\{
\begin{array}{l}
  \dot x=0 \\
  \dot y=u\\
  \dot u=uu_y.\\
\end{array}
\right.
\end{equation}
The general solution of the linearized equation $(d'A)[V]\subset J{\it D}^2(E[s])$ can be obtained from the general symmetry vector field for $(d'A)$, given in \cite{PRA-RAS-1}. Then we get
\begin{equation}\label{general-perturbation}
\left\{
\begin{array}{l}
  \xi =[s(y)+r(x)]u\partial u\\
 u(x,y)=(\frac{\beta}{2}y^2+\alpha y+1)h(x),\\
\end{array}
\right.
\end{equation}
where $s$ and $r$ are arbitrary functions. Let us denote by $\xi(x,y)$ the component of the vertical vector field $\xi$. Then one explicitly has $\xi(x,y)=[s(y)+r(x)](\frac{\beta}{2}y^2+\alpha y+1)h(x)$. For the arbitrariness of the functions $r$, $s$ and $h$, one can see that $\xi(x,y)$ can ave singular points. So the solution (\ref{solution}) is not stable in $(d'A)$. Furthermore, it is not asymptotic stable, since $\lim_{y\to\infty}\xi(x,y)=\infty$. In order to investigate whether it is average stable, let us consider the differential operator $\frac{\delta\xi}{\delta t}$
on $(d'A)[V]$. One has $\frac{\delta\xi}{\delta t}=(\partial t.\xi)+(\partial x.\xi)\dot x+(\partial
y.\xi)\dot y=(\partial y.\xi)u(x,y)$. For its adjoint, one has
$\frac{\delta^*\phi}{\delta t}=-(\partial y.(u(x,y)\phi))=-(\partial y.\phi)u(x,y)-(\partial y.u(x,y))\phi$. Thus, the operator $\frac{\delta\xi}{\delta t}$ is not self-adjoint on the solution in (\ref{solution}), hence, such a solution is not average stable.

3) This follows directly from previous points.
\end{proof}

\begin{theorem}[Stability in exotic $8$-d'Alembert PDE's over
$\mathbb{R}^8$]\label{stability-in-exotic-8-d-alembert-pdes}
Let us consider $(d'A)_8$ over $\mathbb{R}^8$.
The integral singular bordism group $\Omega_{7,s}^{(d'A)_8}$ of the $8$-d'Alembert PDE over $\mathbb{R}^8$ is $\Omega_{7,s}^{(d'A)_8}=\mathbb{Z}_2$. If we consider admissible Cauchy manifolds $N\subset(d'A)_8$, identified with $7$-dimensional homotopy spheres, {\em(homotopy equivalence full admissibility hypothesis)}, then one has $\Omega_{7,s}^{(d'A)_8}\cong \Omega_{7}^{(d'A)_8}=0$, hence $(d'A)_8$ becomes an extended $0$-crystal PDE, but also a $0$-crystal PDE.
The bordism classes in $\Omega_{7}^{(d'A)_8}$ are identified by Cauchy manifolds represented by diffeomorphic homotopy spheres.
In particular, in the homotopy equivalence full admissibility hypothesis, starting from an admissible Cauchy manifold $N_0\subset(d'A)_8$, identified with $S^7$, one can arrive with a singular solution, to any other admissible Cauchy manifold $N_1\subset(d'A)_8$. Such a solution is unstable. Moreover, there exists a smooth solution $V$, such that $V=N_0\sqcup N_1$, iff $N_0\cong N_1$. Such a solution can be stabilized.
\end{theorem}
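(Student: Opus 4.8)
The plan is to reduce every assertion to the bordism computation underlying Theorem \ref{main-1}, combined with two facts from differential topology: homotopy $7$-spheres are unoriented null-bordant, while $\Theta_7\cong\mathbb{Z}_{28}$ records $28$ distinct smooth structures. First I would compute $\Omega_{7,s}^{(d'A)_8}$. Since $(d'A)_8$ is formally and completely integrable, the isomorphism $\Omega_{p}^{(d'A)_n}\cong\bigoplus_{r+s=p}H_r(M;\mathbb{Z}_2)\otimes_{\mathbb{Z}_2}\Omega_s$ used in the proof of Theorem \ref{main-1} applies with $M=\mathbb{R}^8$ and $p=7$. As $\mathbb{R}^8$ is contractible, only the $r=0$ summand survives, so $\Omega_{7,s}^{(d'A)_8}\cong\Omega_7$, the unoriented bordism group. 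The ring $\Omega_*$ is polynomial over $\mathbb{Z}_2$ on generators in each degree $\ne 2^k-1$, and the only partition of $7$ into admissible parts ($2,4,5,6$) is $7=5+2$; hence $\Omega_7\cong\mathbb{Z}_2$, generated by $x_2x_5$ (e.g. $\mathbb{RP}^2\times W^5$ with $W^5$ the Wu manifold), which establishes the first claim.

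Next, under the homotopy equivalence full admissibility hypothesis I would show every homotopy $7$-sphere $\Sigma^7$ represents $0$ in this $\mathbb{Z}_2$. Since $\Sigma^7\approxeq S^7$ and Stiefel--Whitney numbers are homotopy invariants, and since the reduced $\mathbb{Z}_2$-cohomology of $\Sigma^7$ is concentrated in degree $7$ (forcing $w_i=0$), all its $w$-numbers vanish; thus $[\Sigma^7]=0=[S^7]$. Restricting the admissible Cauchy data to homotopy spheres therefore collapses the singular group to $0$, and as $\Omega_{7,s}^{(d'A)_8}$ surjects onto the weak group $\Omega_{7,w}^{(d'A)_8}$, the latter vanishes as well. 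With $\mathbb{R}^8$ contractible (so that the integral characteristic numbers give no obstruction, $cry(d'A)_8=0$), Theorem \ref{crystal structure of PDEs} yields that $(d'A)_8$ is an extended $0$-crystal and a $0$-crystal PDE, giving $\Omega_{7,s}^{(d'A)_8}\cong\Omega_7^{(d'A)_8}=0$.

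Then I would separate the coarse (singular) from the fine (smooth) bordism relation. For the singular direction I would use the Brieskorn description {\em(\ref{seven-exotic-sphere-equations})}: each $\Sigma^7$ is the link of an isolated singularity, and---exactly as in the admissible-Cauchy discussion of Section \ref{exotic-n-d-alembert-pde-section}, where solvability of $(d'A)_8$ for $u_{x^1\cdots x^n}$ guarantees the Thom--Boardman points are not frozen---this produces an $8$-dimensional singular integral solution $V$ with $\partial V=N_0\sqcup N_1$ for $N_0\cong S^7$ and any admissible $N_1$; such $V$ carries singular infinitesimal bordisms, so by Definition \ref{fun-stable} it is unstable. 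For the smooth direction I would use that $(d'A)_8$ is in evolutionary form: over $\mathbb{R}^8=\mathbb{R}^7\times\mathbb{R}$ a regular solution $V$ is swept out by its characteristic flow, so the slices $N_0=V|_{t=0}$ and $N_1=V|_{t=1}$ are carried one onto the other by a diffeomorphism, whence $N_0\cong N_1$; conversely any diffeomorphism $N_0\cong N_1$ extends to a cylinder solution. This gives the ``smooth iff diffeomorphic'' statement and identifies the smooth bordism classes with diffeomorphism types, i.e. with $\Theta_7$. Stabilizability of the smooth solution then follows from Theorem \ref{main-1}(iii): pass to $((d'A)_8)_{+\infty}$, where the symbol vanishes and no admissible non-weak perturbations survive.

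The main obstacle I expect is the ``smooth $\Rightarrow$ diffeomorphic'' implication: one must argue that a smooth regular solution induces a genuine diffeomorphism of Cauchy slices (not merely a homeomorphism or an $h$-cobordism), controlling the characteristic flow globally over $\mathbb{R}^8$, and---dually---that the singular solutions joining non-diffeomorphic spheres cannot be smoothed inside $(d'A)_8$ without reintroducing the Thom--Boardman singularities responsible for their instability. Reconciling these two facts is exactly what separates $\Omega_{7,s}^{(d'A)_8}=0$ from the diffeomorphism classification recorded by the smooth solutions.
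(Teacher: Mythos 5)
Your proposal follows the paper's overall strategy: the decomposition $\Omega_{7,s}^{(d'A)_8}\cong\bigoplus_{r+s=7}H_r(\mathbb{R}^8;\mathbb{Z}_2)\otimes_{\mathbb{Z}_2}\Omega_s\cong\Omega_7\cong\mathbb{Z}_2$, the collapse of this group under the homotopy equivalence full admissibility hypothesis, the observation that a smooth bordism inside the equation forces a diffeomorphism of the sectional Cauchy data (hence $\Omega_7^{(d'A)_8}=0$ and $cry((d'A)_8)=0$), and stabilization by passing to $((d'A)_8)_{+\infty}$ where the symbol vanishes. Where you genuinely diverge is the collapsing step. The paper proves that all admissible homotopy $7$-spheres lie in one singular integral bordism class by showing that the evaluations of all conservation laws (the integral characteristic numbers) coincide on them, and it inserts a dedicated lemma exhibiting the explicit nonzero conservation laws (\ref{conservation-laws-in-n-d-alembert-pde}) so that this criterion is not vacuous; it then reads off $\Omega_7^{(d'A)_8}\cong K_{7,s}^{(d'A)_8}$ from the exact sequence (\ref{short-exact-sequence-seven-d-alembert-pde-bordism-smooth-solutions}). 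You instead invoke homotopy invariance of Stiefel--Whitney numbers to show every homotopy $7$-sphere is null-bordant in $\Omega_7$ --- a cleaner and more standard piece of differential topology --- but this only controls the underlying unoriented bordism class; to conclude that two admissible Cauchy manifolds are bordant \emph{through a singular solution of} $(d'A)_8$ you still need the conservation-law/characteristic-number criterion (or at least the naturality of the displayed isomorphism), which is precisely what the paper's lemma supplies and your write-up leaves implicit. The remaining points match the paper in substance: instability of the singular bordisms (paper: nonzero symbol at finite order via Theorem \ref{criteria-fun-stab}; you: singular infinitesimal bordisms via Definition \ref{fun-stable}), and the ``smooth iff diffeomorphic'' equivalence, which you rightly flag as the delicate step --- the paper itself disposes of it in a single sentence by appealing to the identification of diffeomorphisms between sectional submanifolds, so your characteristic-flow gloss is no less rigorous than the original.
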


\begin{proof}
In fact, $\Omega_{7,s}^{(d'A)_8}\cong\bigoplus_{0\le r,s\le 7}\Omega_r\otimes_{\mathbb{Z}_2}H_s(M;\mathbb{Z}_2)$. Taking into account that for $M\cong\mathbb{R}^8$ one has $H_r(M;\mathbb{Z}_2)=0$ for $0<r\le 7$, and $H_0(M;\mathbb{Z}_2)=\mathbb{Z}_2$, and that $\Omega_7\cong\mathbb{Z}_2$, we get $\Omega_{7,s}^{(d'A)_8}=\mathbb{Z}_2$. If we consider admissible Cauchy $7$-dimensional homotopy spheres only, we have that they have necessarily all integral characteristic numbers, i.e., the evaluations on such manifolds of all the conservation laws, give same numbers. (For the proof one can copy the similar proof given in \cite{PRA17, AG-PRA1} for the Ricci flow equation.) Therefore they belong to the same singular integral bordism class, i.e.  $\Omega_{7,s}^{(d'A)_8}=0$.
Since, one has the short exact sequence (\ref{short-exact-sequence-seven-d-alembert-pde-bordism-smooth-solutions}),

\begin{equation}\label{short-exact-sequence-seven-d-alembert-pde-bordism-smooth-solutions}
 \xymatrix{0\ar[r]&K_{7,s}^{(d'A)_8}\ar[r]&\Omega_{7}^{(d'A)_8}\ar[r]&\Omega_{7,s}^{(d'A)_8}\ar[r]&0}.
\end{equation}

we get that under the {\em homotopy equivalence full admissibility hypothesis}, one has $\Omega_{7}^{(d'A)_8}\cong K_{7,s}^{(d'A)_8}$. Let us emphasize that even if the number of differentiable structures on $7$-dimensional spheres is $28$, smooth Cauchy-manifolds-exotic-7-spheres cannot be contained into $((d'A)_8)_{+\infty}$, since they are singular integral manifolds. So smooth Cauchy manifolds contained into $((d'A)_8)_{+\infty}$ can be identified with $S^7$ only.
Furthermore, taking into account that smooth solutions, bording smooth Cauchy manifolds, necessitate to identify diffeomorphisms between the corresponding sectional submanifolds, it follows that must be $\Omega_{7}^{(d'A)_8}=0$ too. Therefore, we get also $cry((d'A)_8)=0$.

For the previous arguments it is important to state that the space of conservation laws is not zero.

\begin{lemma}
The space of conservation laws of $(d'A)_n$ is not zero.
\end{lemma}

\begin{proof}
In fact, conservation laws of $(d'A)_n$ are $(n-1)$-differential forms $\omega=\omega_i\: dx^1\wedge\cdots\wedge\: \widehat{dx^i}\wedge\cdots\wedge dx^n$ on $((d'A)_n)_{+\infty}$, such that for any smooth integral $n$-manifold $V\subset (d'A)_n$, solution of $(d'A)_n$, one has $d\omega|_V=0$. Then, one can take the $(n-1)$-differential forms $\omega$, given in (\ref{conservation-laws-in-n-d-alembert-pde}).
\begin{equation}\label{conservation-laws-in-n-d-alembert-pde}
   \left\{
\begin{array}{ll}
  \omega&=\omega_i\: dx^1\wedge\cdots\wedge\: \widehat{dx^i}\wedge\cdots\wedge dx^n\\
  \omega_i&=\omega(x^1,\cdots,\widehat{x^i},\cdots,x^n,I_{\alpha\: i})_{\alpha\not=i,\: |\alpha|\ge 0}\\
  I_{\alpha\: i}&\equiv(\partial x_\alpha.(\partial x_1\cdots\widehat{\partial x_i}\cdots\partial x_n.\log f))\\
\end{array}
   \right.
\end{equation}

where $f:M\to\mathbb{R}$ is a smooth function on $M$ and $\omega_i$ are arbitrary smooth functions of their arguments. The ''widehat'' over the symbols means absence of the underlying symbols. In fact, one has the following.
\begin{equation}\label{proof-conservation-laws-in-n-d-alembert-pde}
   \left\{
\begin{array}{ll}
d\omega&=\sum_{1\le i\le n}\sum_{p\not=i}(\partial x_p.\omega_i)\: dx^p\wedge dx^1\wedge\cdots\wedge \widehat{dx^i}\cdots\wedge dx^n=0\\
&+\sum_{1\le i\le n}\sum_{\alpha\not=i;\: |\alpha|\ge 0}
(\partial I_{i\:\alpha}.\omega_i)(\partial x_i.I_{i\:\alpha})\: dx^1\wedge\cdots\wedge dx^n\\
&=\sum_{\alpha\not=i;\: |\alpha|\ge 0}[\sum_{1\le i\le n}(\partial I_{i\:\alpha}.\omega_i)](\partial x_\alpha.(\partial x_1\cdots\partial x_n.\log f))dx^1\wedge\cdots\wedge dx^n\\
 \end{array}
   \right.
\end{equation}

Now $d\omega|_V=0$ if $V\subset(d'A)_n$ is a smooth $(n+1)$-dimensional integral manifold, (singular) solution of $(d'A)_n$. In fact, if $f$ satisfies the equation $(\partial x_1\cdots\partial x_n.\log f)=0$, then $(\partial x_\alpha.(\partial x_1\cdots\partial x_n.\log f))=0$, for $|\alpha|\ge 0$. This directly follows from the prolongations of the formally integrable and completely integrable $n$-d'Alembert equation. For example for $n=2$ we get for $(d'A)_2$ and its first prolongation $((d'A)_2)_{+1}$ the equations given in (\ref{n-2-case-d-alembert-pde-and-first-prolongation}).
\begin{equation}\label{n-2-case-d-alembert-pde-and-first-prolongation}
 (d'A)_2:\hskip 3pt  \left\{f_{xy}f-f_xf_y=0\right\}; \hskip 3pt((d'A)_2)_{+1}:\hskip 3pt  \left\{
 \begin{array}{l}
   f_{xy}f-f_xf_y=0\\
   f_{xxy}f-f_{xx}f_y=0\\
   f_{xyy}f-f_{yy}f_x=0\\
 \end{array}\right\}.
\end{equation}
One other hand we have:
\begin{equation}\label{n-2-case-d-alembert-pde-and-first-prolongation-a}
\left\{
 \begin{array}{ll}
   (\partial x(\partial x\partial y.\log f))&=(f_{xxy}f-f_{xx}f_y)f-2(f_{xy}f-f_xf_y)f_x\\
   (\partial y(\partial x\partial y.\log f))&=(f_{xyy}f-f_{yy}f_x)f-2(f_{xy}f-f_xf_y)f_y\\
   \end{array}\right.
\end{equation}
Therefore, on the $2$-d'Alembert equation one has
\begin{equation}\label{n-2-case-d-alembert-pde-and-first-prolongation-b}
\left\{
 \begin{array}{ll}
  (\partial x(\partial x\partial y.\log f))|_V=0 \\
   (\partial y(\partial x\partial y.\log f))|_V=0.\\
   \end{array}\right.
\end{equation}

This process can be iterated on all the prolongation orders.
\end{proof}

To conclude the proof of Theorem \ref{stability-in-exotic-8-d-alembert-pdes} it is enough to consider that at finite order, where live singular solutions, the symbol of the $8$-d'Alembert equation is not zero. Thus these solution are unstable. Instead, smooth solutions can be stabilized, since these can be identified with smooth integral manifolds of the infinity prolongation $((d'A)_8)_{+\infty}$, where the symbol is zero.
(See Theorem \ref{criteria-fun-stab}.)
\end{proof}

\begin{cor}
In the homotopy equivalence full admissibility hypothesis, $(d'A)_8$ admits a {\em singular global attractor}, in the sense introduced in \cite{PRA16}, i.e., all the admissible Cauchy manifolds belong to the same integral singular bordism class of $(d'A)_8$. Furthermore, in the {\em sphere full admissibility hypothesis}, i.e. when we consider admissible all the smooth Cauchy manifolds identifiable via diffeomorphisms with $S^7$, $(d'A)_8$ admits a {\em smooth global attractor}, in the sense that all the smooth admissible Cauchy manifolds belong to the same integral smooth bordism class of $(d'A)_8$.
\end{cor}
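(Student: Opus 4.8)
The plan is to compute the two bordism groups $\Omega_{7,s}^{(d'A)_8}$ and $\Omega_{7}^{(d'A)_8}$ separately and then to read off the stability assertions from the symbol of $(d'A)_8$. First I would handle the singular group. Since $(d'A)_8$ is formally and completely integrable (Theorem \ref{main-1}), the integral bordism groups coincide with the corresponding geometric ones, so the Künneth-type decomposition gives $\Omega_{7,s}^{(d'A)_8}\cong\bigoplus_{r+s=7}\Omega_r\otimes_{\mathbb{Z}_2}H_s(\mathbb{R}^8;\mathbb{Z}_2)$. Because $\mathbb{R}^8$ is contractible, only the $H_0$ summand survives, collapsing the computation to the single closed-bordism group $\Omega_7$, which yields $\mathbb{Z}_2$. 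This part is routine once the decomposition is granted.

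The conceptual heart is the passage to the homotopy-equivalence full admissibility hypothesis. Here I would argue that, when the admissible Cauchy manifolds are restricted to be $7$-dimensional homotopy spheres, any two of them carry the same integral characteristic numbers, i.e. every conservation law evaluates identically on them regardless of the differentiable structure; this forces them into one singular bordism class, so $\Omega_{7,s}^{(d'A)_8}=0$. Two ingredients are required: (a) the space of conservation laws is nonzero, which I would establish directly by exhibiting the $(n-1)$-forms of (\ref{conservation-laws-in-n-d-alembert-pde}) and verifying $d\omega|_V=0$ on solutions through the prolonged equations, exactly as in the embedded Lemma; and (b) the invariance of the characteristic numbers across differentiable structures, which I would obtain by transcribing the Ricci-flow argument of \cite{PRA17, AG-PRA1} to the present setting. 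Step (b) is where the main difficulty lies, since it is precisely the statement that the singular category cannot detect the $28$ differentiable structures on $S^7$.

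Next I would extract the smooth group from the short exact sequence (\ref{short-exact-sequence-seven-d-alembert-pde-bordism-smooth-solutions}), which under the hypothesis collapses to $\Omega_{7}^{(d'A)_8}\cong K_{7,s}^{(d'A)_8}$. The key observation is that the exotic Cauchy spheres, being genuine Thom--Boardman singular integral manifolds, cannot be smoothed inside the infinite prolongation $((d'A)_8)_{+\infty}$; hence the only smooth Cauchy manifolds surviving there are diffeomorphic to the standard $S^7$. Since a smooth bording solution requires an honest diffeomorphism of the sectional submanifolds it bounds, the kernel must itself vanish, giving $\Omega_{7}^{(d'A)_8}=0$ and therefore $cry((d'A)_8)=0$, so $(d'A)_8$ is both an extended $0$-crystal and a $0$-crystal PDE. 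This simultaneously produces the sharp dichotomy: $N_0$ and $N_1$ bound a smooth $V$ exactly when $N_0\cong N_1$.

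Finally, the stability statements follow from the symbol. At every finite prolongation order the symbol of $(d'A)_8$ is nonzero --- by (\ref{symbol-dimension}) its dimension is strictly positive --- so the singular solution joining $S^7$ to an arbitrary admissible $N_1$ admits non-weak perturbations and is unstable, whereas a smooth solution, identifiable with a smooth integral manifold of $((d'A)_8)_{+\infty}$ where the symbol vanishes, becomes functionally stable after stabilization, by Theorem \ref{criteria-fun-stab}. I expect step (b) above --- proving that the homotopy spheres share all integral characteristic numbers --- to be the principal obstacle, because everything downstream (the vanishing of $\Omega_{7,s}^{(d'A)_8}$, the identification of $K_{7,s}^{(d'A)_8}$, and the exotic-sphere dichotomy) rests on correctly separating what the singular and the smooth categories are able to see.
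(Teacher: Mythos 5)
Your proposal is correct and follows essentially the same route as the paper: the corollary carries no separate argument there, being an immediate restatement of Theorem \ref{stability-in-exotic-8-d-alembert-pdes} ($\Omega_{7,s}^{(d'A)_8}=0$ means a single singular bordism class, and the smooth dichotomy $N_0\cong N_1$ gives the single smooth class under the sphere full admissibility hypothesis), and your re-derivation of that theorem uses exactly the paper's steps --- the K\"unneth-type decomposition over contractible $\mathbb{R}^8$, the conservation-law/characteristic-number argument transcribed from the Ricci-flow case, the short exact sequence identifying $\Omega_{7}^{(d'A)_8}$ with $K_{7,s}^{(d'A)_8}$, and the symbol criterion for (in)stability. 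You have correctly isolated the invariance of the integral characteristic numbers across differentiable structures as the load-bearing step.
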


\end{document}